\def\R{{\mathbb R}}
\def\N{{\mathbb N}}
\def\bb{{\boldsymbol{b}}}
\def\bu{\boldsymbol{u}}
\def\AA{{\mathcal A}}
\def\BB{{\mathcal B}}
\def\CC{{\mathcal C}}
\def\HH{{\mathcal H}}
\def\NN{{\mathcal N}}
\def\OO{{\mathcal O}}
\def\PP{{\mathcal P}}
\def\RR{{\mathcal R}}
\def\TT{{\mathcal T}}
\def\AAb{\boldsymbol{\AA}}
\def\BBb{\boldsymbol{\BB}}
\def\PPb{\boldsymbol{\PP}}
\def\CCb{\boldsymbol{\CC}}
\def\Ub{\boldsymbol{U}}
\def\ub{\boldsymbol{u}}
\def\fb{\boldsymbol{f}}
\def\Ab{\boldsymbol{A}}
\def\ab{\boldsymbol{a}}
\def\bb{\boldsymbol{b}}
\def\Rb{\boldsymbol{R}}
\def\Tb{\boldsymbol{T}}
\def\Sb{\boldsymbol{S}}
\def\Fb{\boldsymbol{F}}
\def\alphab{\boldsymbol{\alpha}}
\def\betab{\boldsymbol{\beta}}
\def\E{{\mathbb E}}
\def\norm#1#2{\|#1\|_{#2}}
\def\set#1#2{\big\{#1\,:\,#2\big\}}
\def\id{\mathfrak{1}}
\newcommand{\enorm}[2][]{#1|\hspace*{-.5mm}#1|\hspace*{-.5mm}#1|#2#1|\hspace*{-.5mm}#1|\hspace*{-.5mm}#1|}
\def\normL2#1#2{\|#1\|_{L^2(#2)}}
\newtheorem{theorem}{Theorem}
\newtheorem{lemma}[theorem]{Lemma}
\newtheorem{corollary}[theorem]{Corollary}
\newtheorem{remark}[theorem]{Remark}
\def\id{{\rm id}}
\def\level{\operatorname{lev}}
\numberwithin{equation}{section}
\begin{document}
\title{
Sparse Compression of Expected Solution Operators
}
\author{Michael Feischl}
\address{TU Wien}
\email{michael.feischl@tuwien.ac.at}

\author{Daniel Peterseim}
\address{Universit\"at Augsburg}
\email{daniel.peterseim@math.uni-augsburg.de}
\thanks{The authors would like to acknowledge the kind hospitality of the Erwin Schr\"odinger International Institute for Mathematics and Physics (ESI) where large parts of this research were developed under the frame of the Thematic Programme \emph{Numerical Analysis of Complex PDE Models in the Sciences}. D.~Peterseim acknowledges support by Deutsche Forschungsgemeinschaft in the Priority Program 1748 {\it Reliable simulation techniques in solid mechanics} (PE2143/2-2).
M.~Feischl acknowledges support by Deutsche
Forschungsgemeinschaft (DFG) through CRC 1173.}

\date{\today}

\begin{abstract}
We show that the expected solution operator of prototypical linear elliptic partial differential equations with random coefficients is well approximated by a computable sparse matrix. This result is based on a random localized orthogonal multiresolution decomposition of the solution space that allows both the sparse approximate inversion of the random operator represented in this basis as well as its stochastic averaging. The approximate expected solution operator can be interpreted in terms of classical Haar wavelets. When combined with a suitable sampling approach for the expectation, this construction leads to an efficient method for computing a sparse representation of the expected solution operator.  
\end{abstract}
\maketitle
\thispagestyle{empty}
\section{Introduction}
For a random (or parameterized) family of prototypical linear elliptic partial differential operators $\AAb(\omega)=-\operatorname{div}(\Ab(\omega)\nabla\bullet)$ and a given deterministic right-hand side $f$, we consider the family of solutions
\begin{align*}
\ub(\omega) := \AAb(\omega)^{-1}f
\end{align*}
with events $\omega\in \Omega$ in some probability space $\Omega$.
We define the harmonically averaged operator  
\begin{align*}
\AA:= \Big(\E[\AAb(\omega)^{-1}]\Big)^{-1}.
\end{align*}
The idea behind this definition is that $\E(\ub)$ satisfies
\begin{align*}
 \E[\ub]=\AA^{-1} f.
\end{align*}
In this sense, $\AA$ may be understood as a stochastically homogenized operator and $\AA^{-1}$ is the 
effective solution operator. Note that this definition does not rely on probabilistic structures of the random 
diffusion coefficient $\Ab$ such as stationarity, ergodicity or any characteristic length of correlation.
However, we shall emphasize that $\AA$ does not coincide with the partial differential operator that would result 
from the standard theory of stochastic 
homogenization (under stationarity and ergodicity) \cite{Kozlov1979,PapanicolaouVaradhan1981,Yurinskii1986}; see e.g. 
\cite{BourgeatPiatnitski2004}, \cite{GloriaOtto2015,DuerinckxGloriaOtto2016,GloriaOtto2017}, 
\cite{ArmstrongKuusiMourrat2017} for quantitative results.
Recent works on discrete random problems on $\mathbb{Z}^d$ with
iid edge conductivies indicate that $\AA$ is rather a non-local perturbation of the Laplacian by a convolution type operator \cite{Bourgain2018,2018arXiv180410260K,Duer19}. The goal of the present work is to show that, even in the more general PDE setup of this paper without any assumptions on the distribution of the random coefficient, the expected solution operator $\AA^{-1}$ can be represented accurately by a sparse matrices $R^\delta$ in the sense that 
$$\|\AA^{-1}-R^\delta\|_{L^2(D)\rightarrow L^2(D)}\leq \delta$$ 
for any $\delta>0$ while the number of non-zero entries of $R^\delta$ scales like $\delta^{-d}$ up to logarithmic-in-$\delta$ terms (see Theorem~\ref{t:main}). 
\medskip

The sparse matrix representation of $\AA^{-1}$ is based on multiresolution decompositions of the energy space in the spirit of numerical homogenization by localized orthogonal decomposition (LOD)  \cite{MalP14,HenningPeterseim2013,Peterseim2016,GallistlPeterseim2017,KorPY18,GallistlPeterseim2017random} and, in particular, its multi-scale generalization that is popularized under the name gamblets \cite{gamblets}. In this paper, a one-to-one correspondence of a gamblet decompostion and classical Haar wavelets is established via $L^2$-orthogonal projections and conversely by corrections involving the solution operator (see Section~\ref{s:wavelets}). The resulting problem-dependent multiresolution decompositions block-diagonalize the random operator $\AAb$ for any event in the probability space (see Section~\ref{sec:hierarchical}). The block-diagonal representations (with sparse blocks) are well conditioned and, hence, easily inverted to high accuracy using a few steps of standard linear iterative solvers. The sparsity of the inverted blocks is preserved to the degree that it deteriorates only logarithmically with higher accuracy. 

While the sparsity pattern of the inverted block-diagonal operator is independent of the stochastic parameter and, hence, not affected when taking the expectation (or any sample mean) the resulting object cannot be interpreted in a known basis. This issue is circumvented by reinterpreting the approximate inverse stiffness matrices in terms of the deterministic Haar basis before stochastic averaging (see Section~\ref{sec:transform}). This leads to an accurate representation of $\AA^{-1}$ in terms of piecewise constant functions. Sparsity is not directly preserved by this transformation but can be retained by some appropriate hyperbolic cross truncation which is justified by scaling properties of the multiresolution decomposition (see Section~\ref{s:inverse}). 
\medskip

Apart from the mathematical question of sparse approximability of the expected operator, the above construction leads to a computationally efficient method for approximating $\AA^{-1}$ when combined with any sampling approach for the approximation of the expectation (see Section~\ref{s:sos}). 
This new sparse compression algorithm for the direct discretization of $\AA^{-1}$ may be beneficial if we want to compute $\E[\ub]$ for multiple right-hand sides $f$. This, for example, is the case if we have an independent probability space $\xi\in\Xi$ influencing $f=\fb({\xi})$
as well as the corresponding solution $\Ub(\omega,\xi) :=\AAb(\omega)^{-1}\fb(\xi)$.
Then, we might be interested in the average behavior $\E_{\Omega\times \Xi}[\Ub]$
which is the solution of
\begin{align}\label{eq:darcy}
\E_{\Omega\times \Xi}[\Ub]=\E_{\Xi}[\AA^{-1}\fb]=\AA^{-1}\E_{\Xi}[\fb].
\end{align}
While this can be computed efficiently with sparse approximations of the random parameter (see, e.g.,~\cite{coh2,coh1}) or 
multi-level algorithms (see, e.g.,~\cite{homl,giles}) under regularity assumption on the random parameter, the present
approach does not assume any smoothness apart from integrability. As a practical example for the problem might serve the Darcy flow as a model of ground water flow. Here, $\AAb$ is a random diffusion process modeling the unknown diffusion coefficient of the ground material.
The right-hand side $\fb$ would be the random (unknown) injection of pollutants into the ground water. Ultimately, the user would be interested in the average 
distribution of pollutants in the ground. Obviously, computing the right-hand side of~\eqref{eq:darcy} requires the user
to sample $\Omega$ and $\Xi$ successively, whereas computing the left-hand side of~\eqref{eq:darcy} forces the
user to sample the much larger product space $\Omega\times \Xi$. While for plain Monte Carlo sampling only 
the possibly increased variance of the product random variable effects the convergence, 
higher-order sampling methods such as sparse-grids and quasi-Monte Carlo will directly 
(and in case of lack of regularity on the random parameter quite drastically, see, e.g., exponential 
dependence on dimension in~\cite{dick}) benefit from the reduction of dimension of the probability space.
Therefore, an accurate discretization of $\AA$ can help saving significant computational cost. 

We consider some prototypical linear second order elliptic partial differential equation with random diffusion coefficient. Let $(\Omega,\mathcal{F},\mathbb{P})$ be a probability space with set of events $\Omega$, $\sigma$-algebra $\mathcal{F}\subseteq 2^\Omega$ and probability measure $\mathbb{P}$. 
The expectation operator is denoted by $\mathbb{E}$.
Let $D\subseteq\mathbb R^d$ for $d\in\{1,2,3\}$ be
a bounded Lipschitz polytope with diameter of order $1$. The set of admissible coefficients reads
\begin{equation*}\label{e:classM}
\mathcal{M}(D,\gamma_{\operatorname{min}},\gamma_{\operatorname{max}}) 
= \left\{\begin{aligned}A\in 
L^\infty(D;\mathbb{R}^{d\times d}_{\mathrm{sym}})&\text{ s.t. }\,\gamma_{\operatorname{min}}|\xi|^2 \leq (A(x)\xi)\cdot\xi \leq \gamma_{\operatorname{max}} |\xi|^2\\&\text{for a.e. }x\in D\text{ and all }\xi\in\mathbb{R}^d
\end{aligned}
\right\}
\end{equation*}
for given uniform spectral bounds $0<\gamma_{\operatorname{min}}\leq\gamma_{\operatorname{max}}<\infty$. Here, $\mathbb{R}^{d\times d}_{\mathrm{sym}}$ denotes
the set of symmetric $d\times d$ matrices. 
Let $\Ab$ be a Bochner-measurable $\mathcal{M}(D,\gamma_{\operatorname{min}},\gamma_{\operatorname{max}})$-valued random field with $\gamma_{\operatorname{max}}>\gamma_{\operatorname{min}}>0$. Note that we do not make any structural assumptions regarding the distribution of $\Ab$. Moreover, realizations in  $\mathcal{M}(D,\gamma_{\operatorname{min}},\gamma_{\operatorname{max}})$ are fairly free to vary within the bounds $\gamma_{\operatorname{min}}$ and $\gamma_{\operatorname{max}}$ without any conditions on frequencies of variation or smoothness. 

Denote the energy space by $V:=H^1_0(D)$ and let $f\in  V^*=H^{-1}(D)$ be deterministic. The prototypical second order elliptic variational problem seeks a $V$-valued random field $\ub$ such that, for almost all $\omega\in\Omega$,
\begin{equation}\label{e:modelweak}
\ab_\omega(\ub(\omega),v):=\int_D(\Ab(\omega)(x)\nabla\ub(\omega)(x))\cdot\nabla v(x)\,dx = f(v)\quad\text{for all }v\in V.
\end{equation}
The bilinear from $\ab_\omega$ depends continuously on the coefficient $\Ab(\omega)\in \mathcal{M}(D,\gamma_{\operatorname{min}},\gamma_{\operatorname{max}})$ and, particularly, is measurable as a function of $\omega$. Hence,
the reformulation of this problem in the Hilbert space $L^2(\Omega;V)$ of $V$-valued random fields with finite second moments shows well-posedness in the sense that there exists a unique solution 
$\ub\in L^2(\Omega;V)$ with
\begin{equation*}
\|\nabla\ub\|_{L^2(\Omega;V)}
:=\left(\int_\Omega\int_D |\nabla(\ub(\omega))(x)|^2\,dx \,d\mathbb{P}(\omega)\right)^{1/2}
\leq \gamma_{\operatorname{min}}^{-1}\|f\|_{V^*}. 
\end{equation*}
To connect the model problem to the operator setting of the introduction, we shall  define the random operator $\AAb\colon \Omega\to\mathcal{L}(V,V^*)$ by
\[
\langle \AAb(\omega) u, v \rangle_{V^*,V} := \ab_\omega(u, v)
\]  
for functions $u,v\in V$ and $\omega\in\Omega$. Then the model problem \eqref{e:modelweak} can be rephrased as
\[
\AAb(\omega) \ub(\omega) = f\quad\text{for almost all }\omega\in\Omega. 
\]
For convenience, we define the sample-dependent energy norm $\enorm{\cdot}_{\omega}^2:= \ab_\omega(\cdot,\cdot)$.
\section{Coefficient-adapted hierarchical bases}\label{s:wavelets}
Let $\TT_\ell$, $\ell=0,\ldots,L$ denote a sequence of uniform refinements with mesh-size $h_\ell$
of some initial mesh $\TT_0$ of $D$ and let $\NN(\TT_\ell)$ denote the nodes of the meshes. We allow fairly general meshes in the sense that we only require a reference element $T_{\rm ref}$ together with a family of uniformly bi-Lipschitz maps $\Psi_T\colon T_{\rm ref}\to T$ for all elements $T\in\TT_\ell$, $\ell=0,\ldots,L$.
Straightforward examples are simplicial meshes generated from an initial triangulation by red refinement (or newest vertex bisection)
or quadrilateral meshes generated by subdividing the elements into $2^d$ new elements. Particularly, hanging nodes do not pose problems as long as the other properties are observed.
 
The number of levels (or scales) $L$ will typically be chosen proportional to the modulus of some logarithm of the desired accuracy $1\gtrsim\delta>0$. We assume $h_{\ell+1}\leq  h_\ell/2$. Note that any other fixed factor of mesh width reduction strictly smaller than one would
do the job. Define the set of descendants of an element $T\in\TT_\ell$ by ${\rm ref}(T):=\set{T^\prime\in\TT_{\ell+1}}{T^\prime\subseteq T}$.
For each $T\in\bigcup_{\ell=0}^{L-1}\TT_\ell$, we pick piecewise constant functions $\phi_{T,1},\phi_{T,2},\ldots,\phi_{T,\#{\rm ref}(T)}\in P^0({\rm ref}(T))$ such that they are pairwise $L^2(T)$-orthogonal and $\int_T\phi_{T,j}\,dx=0$ for all $j=1,\ldots, \#{\rm ref}(T)$. With the indicator functions $\chi_{(\cdot)}$, we then define $\HH_0:=\set{\chi_T}{T\in\TT_0}$ and for $\ell\geq 1$
\begin{align}\label{e:Haarell}
	\HH_\ell:=\bigcup_{T\in\TT_{\ell-1}}\set{\phi_{T,j}}{j=1,\ldots, \#{\rm ref}(T)}. 
\end{align}
We define a Haar basis via \begin{equation*}
\HH:=\bigcup_{\ell=0}^L\HH_\ell.
\end{equation*}

\begin{lemma}\label{lem:haar}
	The basis $\HH$ is $L^2$-orthogonal and local in the sense that $\phi\in \HH_\ell$ satisfies ${\rm supp}(\phi)=T$ for some $T\in\TT_{\ell-1}$ or $T\in\TT_0$ for $\ell=0$.
\end{lemma}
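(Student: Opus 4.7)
The plan is to verify the two properties separately, with locality essentially being a restatement of the definition, and $L^2$-orthogonality reduced to a case analysis based on the hierarchical nesting of the meshes $\TT_\ell$.

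First I would dispatch locality. By definition, any $\phi\in\HH_0$ is of the form $\chi_T$ with $T\in\TT_0$, so $\mathrm{supp}(\phi)=T$. For $\ell\ge 1$, any $\phi\in\HH_\ell$ is of the form $\phi_{T,j}$ with $T\in\TT_{\ell-1}$ and $\phi_{T,j}\in P^0(\mathrm{ref}(T))$; since $\bigcup\mathrm{ref}(T)=T$, the support is contained in $T$, and the zero-mean property forces it to equal $T$ (otherwise $\phi_{T,j}$ would be constant on a set of positive measure and zero elsewhere, contradicting $\int_T\phi_{T,j}\,dx=0$ unless $\phi_{T,j}\equiv 0$).

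Next I would establish orthogonality. Take $\phi\in\HH_{\ell_1}$ and $\psi\in\HH_{\ell_2}$ with (WLOG) $\ell_1\le\ell_2$, supported on $T_1$ and $T_2$ respectively with $T_1\in\TT_{\max(\ell_1-1,0)}$ and $T_2\in\TT_{\max(\ell_2-1,0)}$. Because the mesh hierarchy is nested, the elements $T_1$ and $T_2$ are either interior-disjoint or one contains the other, so I would split into three cases:
\begin{itemize}
\item If $T_1\cap T_2$ has measure zero, then $\int_D\phi\psi\,dx=0$ trivially.
\item If $T_1=T_2$ and $\ell_1=\ell_2$, orthogonality holds by the construction of the family $\{\phi_{T,j}\}_j$ (for $\ell_1=\ell_2=0$ this case reduces to the previous one, since $\TT_0$-elements are interior-disjoint).
\item If $\ell_1<\ell_2$ and $T_2\subsetneq T_1$, I would argue that $\phi$ is constant on $T_2$: by construction $\phi$ is piecewise constant on $\mathrm{ref}(T_1)\subset\TT_{\ell_1}$ (or on $\TT_0$ if $\ell_1=0$), while $T_2\in\TT_{\ell_2-1}$ with $\ell_2-1\ge\ell_1$, so by nestedness $T_2$ is contained in a single element of $\TT_{\ell_1}$. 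Hence $\int_D\phi\psi\,dx=\phi|_{T_2}\int_{T_2}\psi\,dx=0$ by the vanishing mean of $\psi$.
\end{itemize}

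The cases are exhaustive, so no step should present any real obstacle; the only point requiring care is the use of the nested mesh hierarchy to conclude that $T_2$ sits inside a single element of the coarser refinement $\mathrm{ref}(T_1)$, which is where the assumption $h_{\ell+1}\le h_\ell/2$ and the uniform refinement property of $\TT_\ell$ enter.
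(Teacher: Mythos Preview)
Your orthogonality argument is correct and matches the paper's: a case split on whether the levels agree, using disjoint supports or the built-in $L^2(T)$-orthogonality when they do, and the zero-mean property against a function that is constant on the finer support when they do not. If anything, your version is more careful than the paper's, which for $k=\ell$ simply says the supports have disjoint interiors and thereby glosses over the case of two distinct $\phi_{T,j},\phi_{T,j'}$ sharing the same $T$.

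Your locality argument, however, overreaches. You try to show $\mathrm{supp}(\phi_{T,j})=T$ exactly, arguing that otherwise ``$\phi_{T,j}$ would be constant on a set of positive measure and zero elsewhere''. That implication is false: with, say, four equal-volume children $T_1,\dots,T_4$ of $T$, the function $\chi_{T_1}-\chi_{T_2}$ lies in $P^0(\mathrm{ref}(T))$, has zero mean on $T$, is not constant on its support, and yet its support is $T_1\cup T_2\subsetneq T$. The construction only guarantees $\mathrm{supp}(\phi_{T,j})\subseteq T$, which is all that the rest of the paper uses; the paper's own proof just says locality ``follows readily from the construction'' and does not attempt to establish equality either. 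Finally, the inclusion $T_2\subseteq$ (a single element of $\TT_{\ell_1}$) in your third case is a consequence of the meshes being nested refinements, not of the quantitative bound $h_{\ell+1}\le h_\ell/2$.
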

\begin{proof}
Let $\phi_\ell\in \HH_\ell$ and $\phi_k\in \HH_k$. If $k=\ell$ then the interiors of the supports of any $\phi_k\neq\phi_\ell\in\HH_k$ are disjoint which implies $L^2(D)$ orthogonality. If $k<\ell$, we have that $\phi_k$ is constant on ${\rm supp}(\phi_\ell)$. Since $\int_D \phi_\ell\,dx=0$ by definition, this concludes the proof of $L^2$-orthogonality. Locality follows readily from the construction.
\end{proof}
\begin{remark}
For uniform Cartesian meshes, $\HH$ is the Haar basis. The choice of the $2^d-1$ generating functions follows the standard procedure for Haar wavelets (see e.g. \cite{MR1436437}). The construction is applicable to general meshes that are not based on tensor-product structures.
\end{remark}
\begin{figure}
	\includegraphics[height=0.12\textheight]{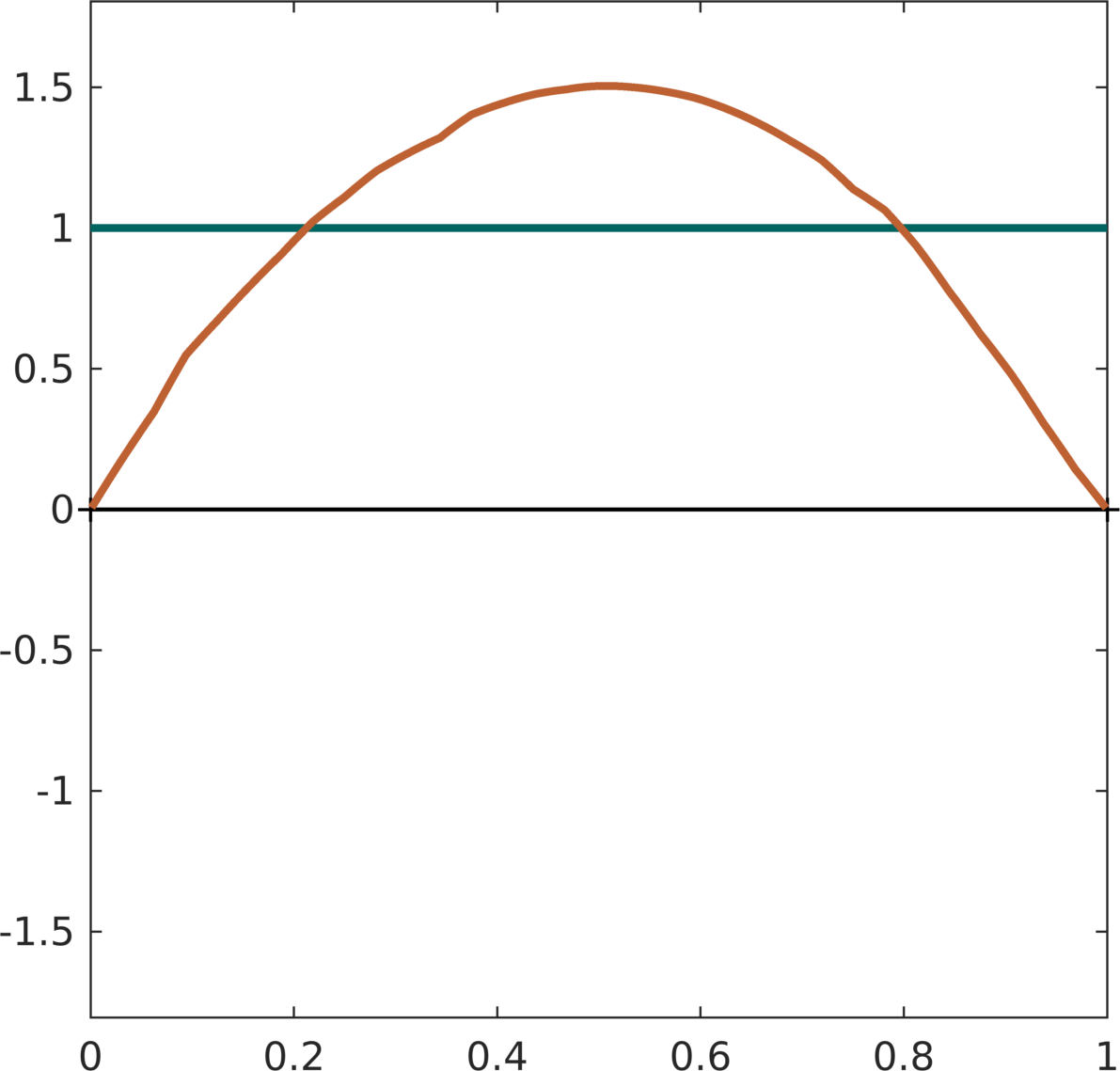}
	\includegraphics[height=0.12\textheight]{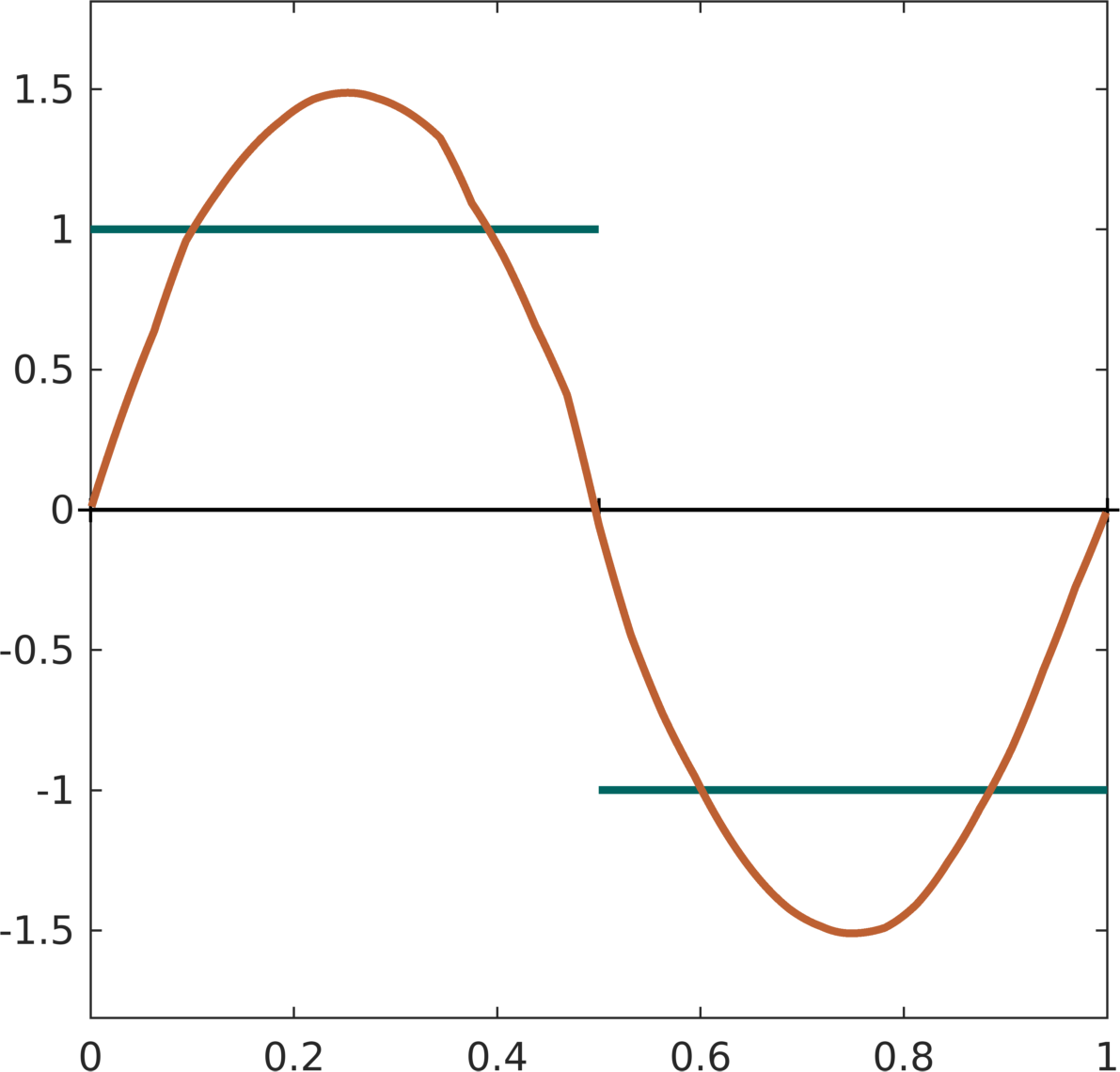}
	\includegraphics[height=0.12\textheight]{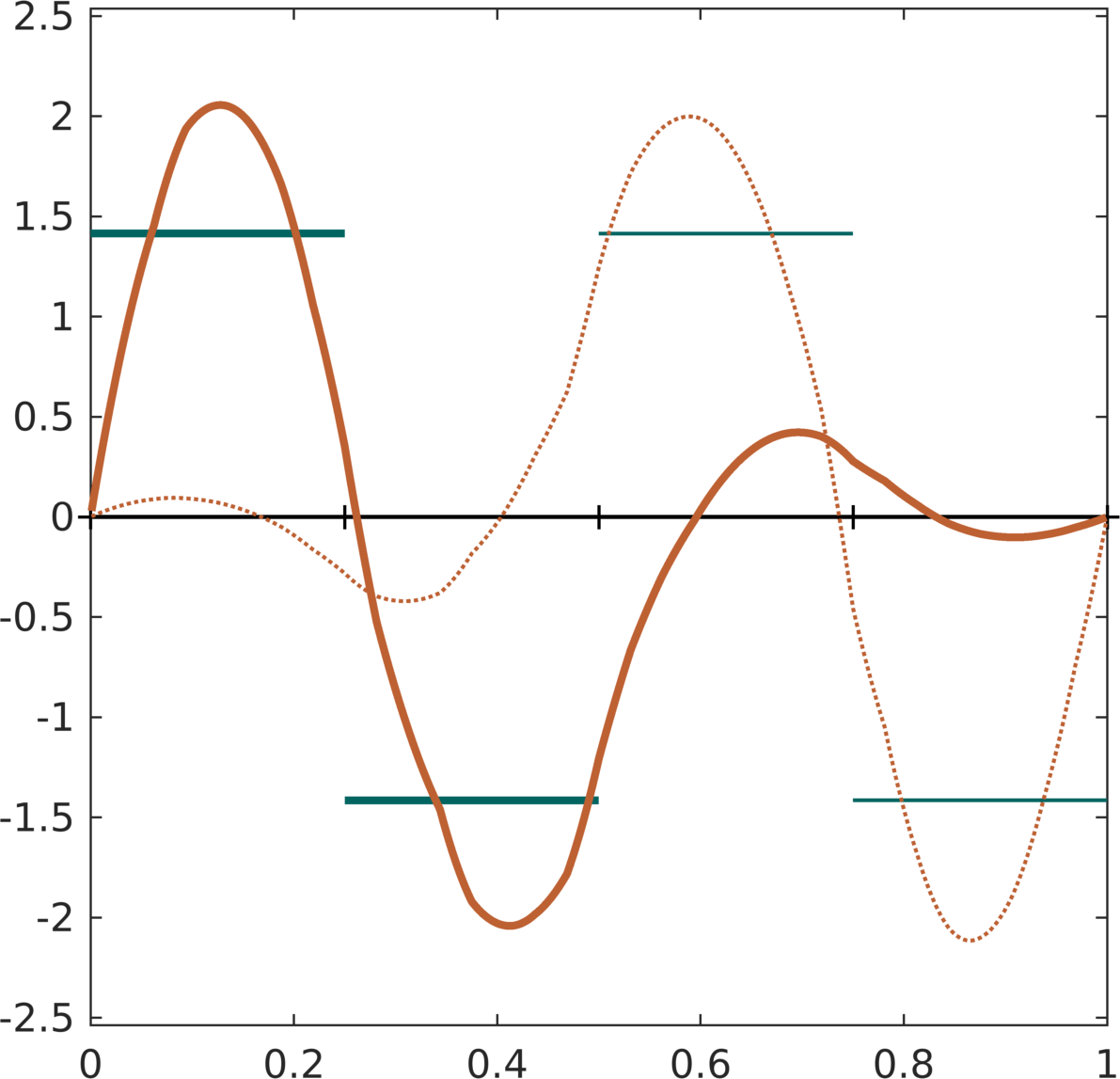}
	\includegraphics[height=0.12\textheight]{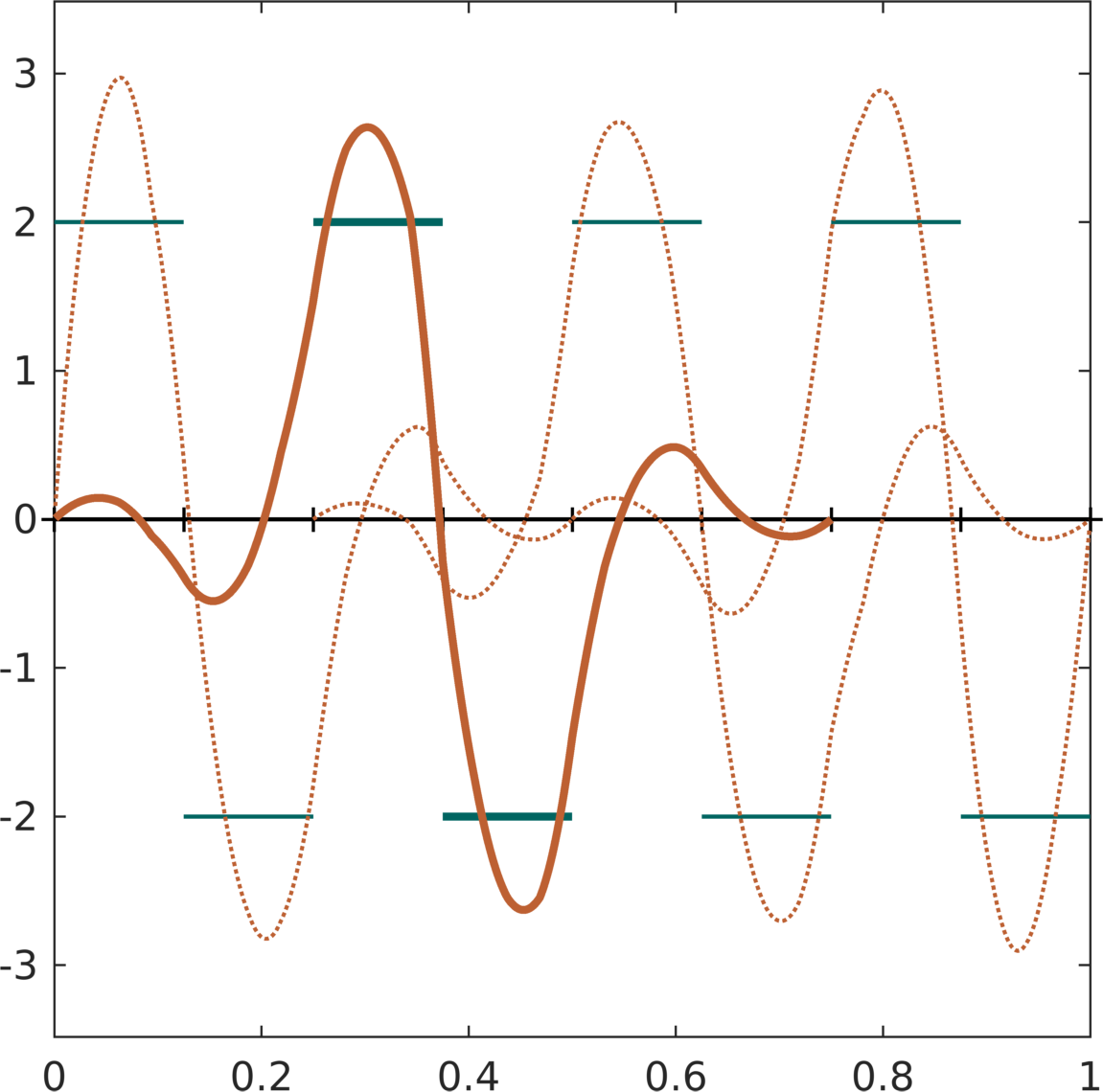}	\includegraphics[height=0.12\textheight]{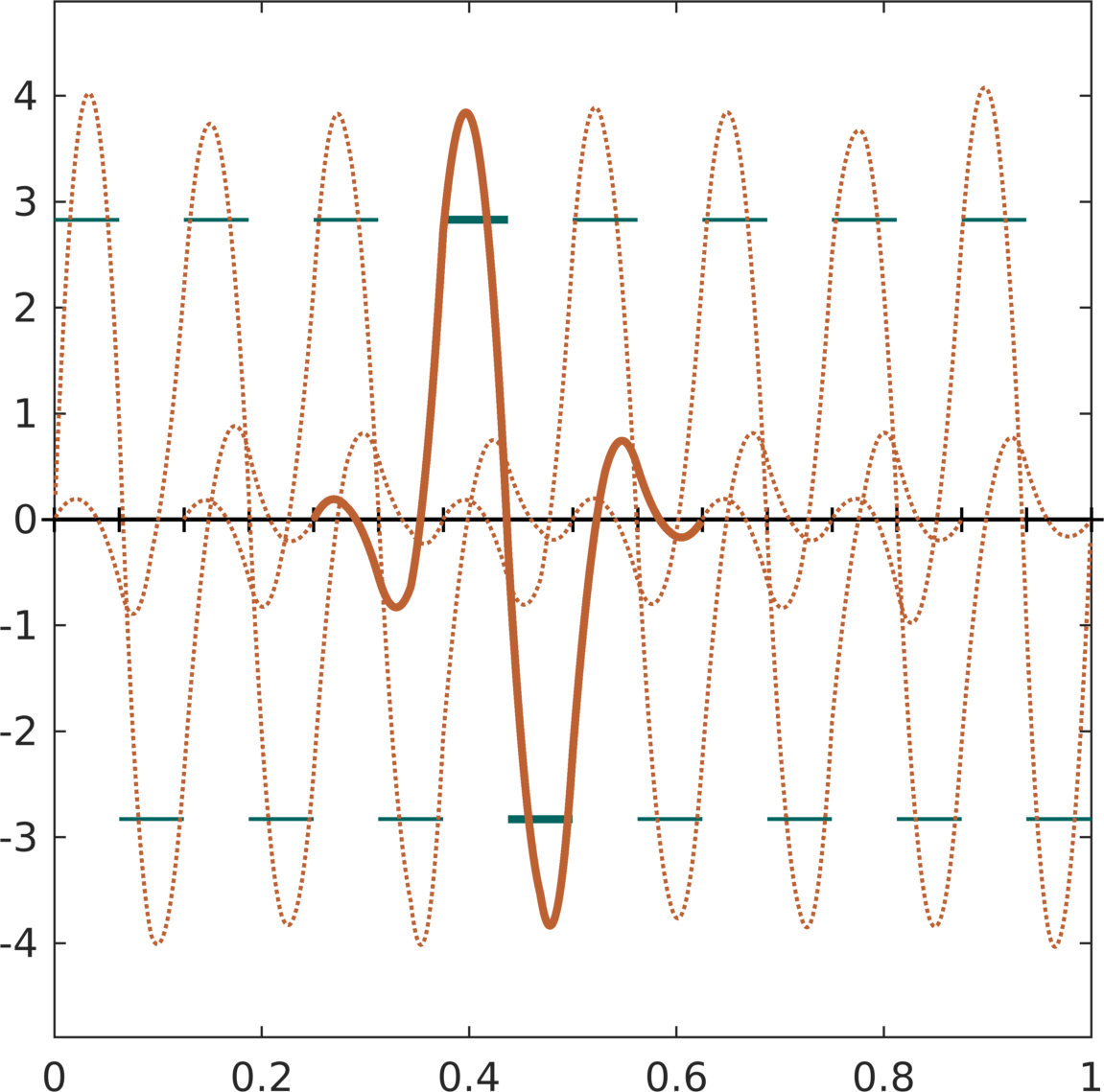}\\[10pt]
	\includegraphics[height=0.118\textheight]{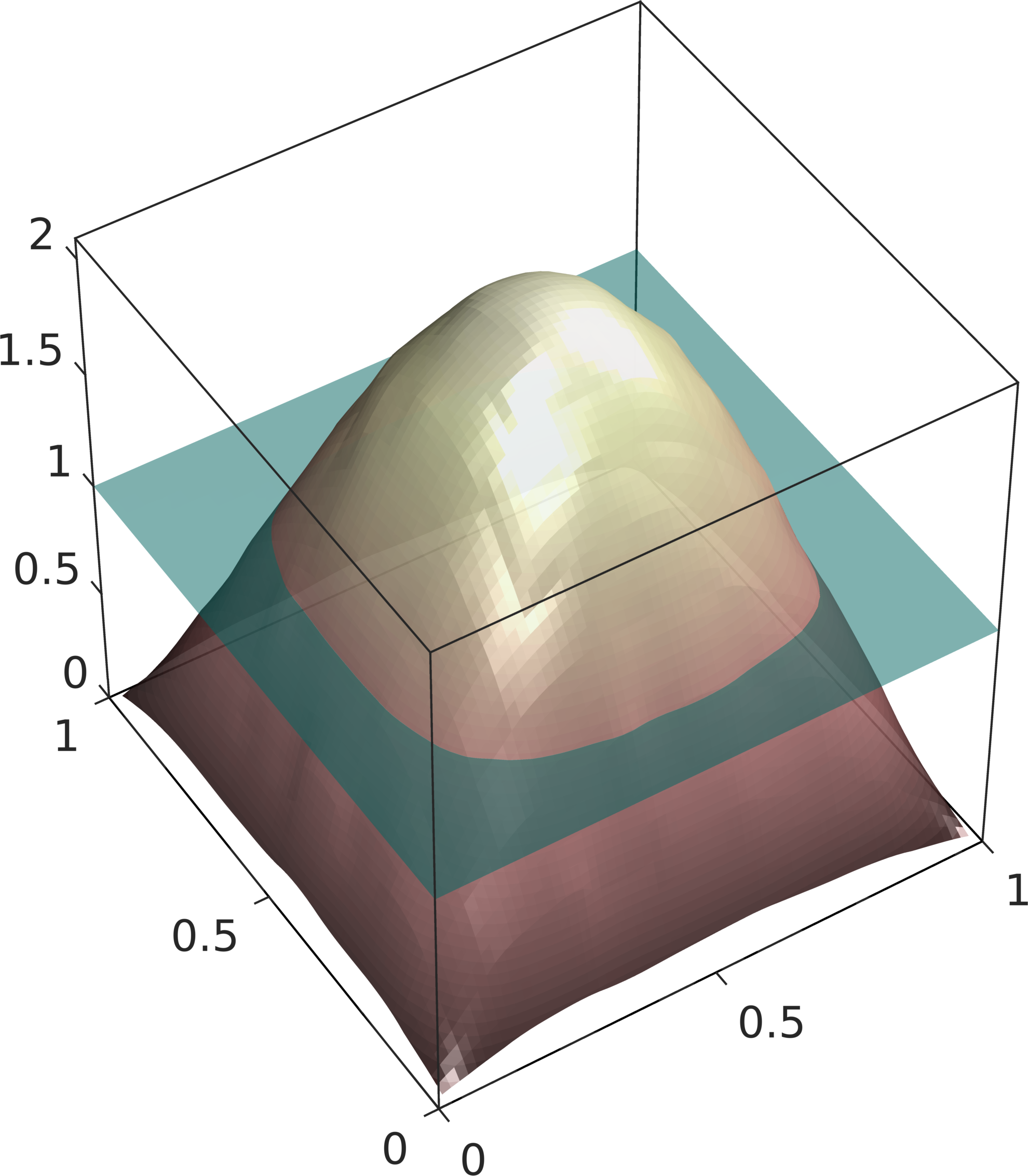}
	\includegraphics[height=0.118\textheight]{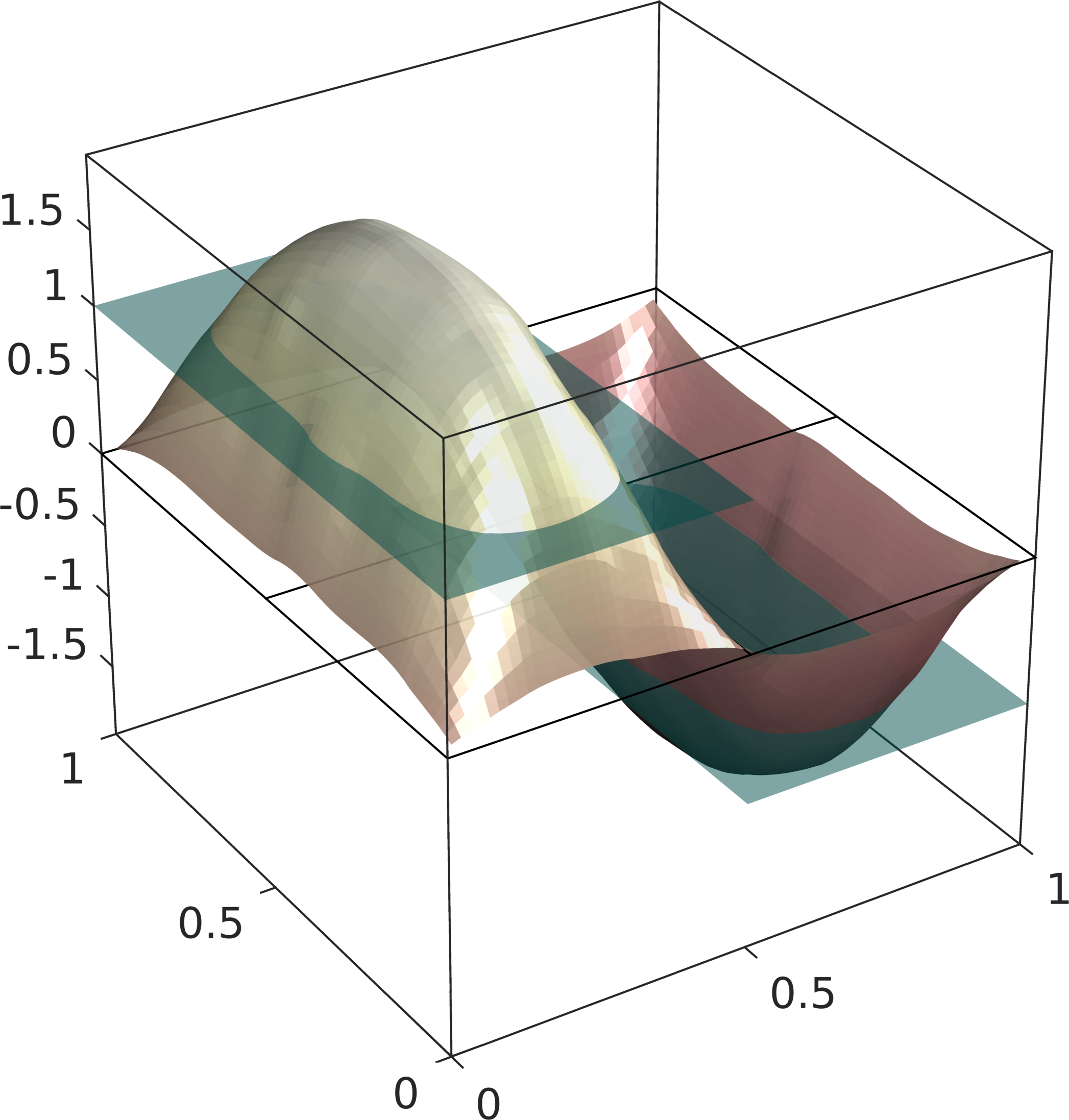}
	\includegraphics[height=0.118\textheight]{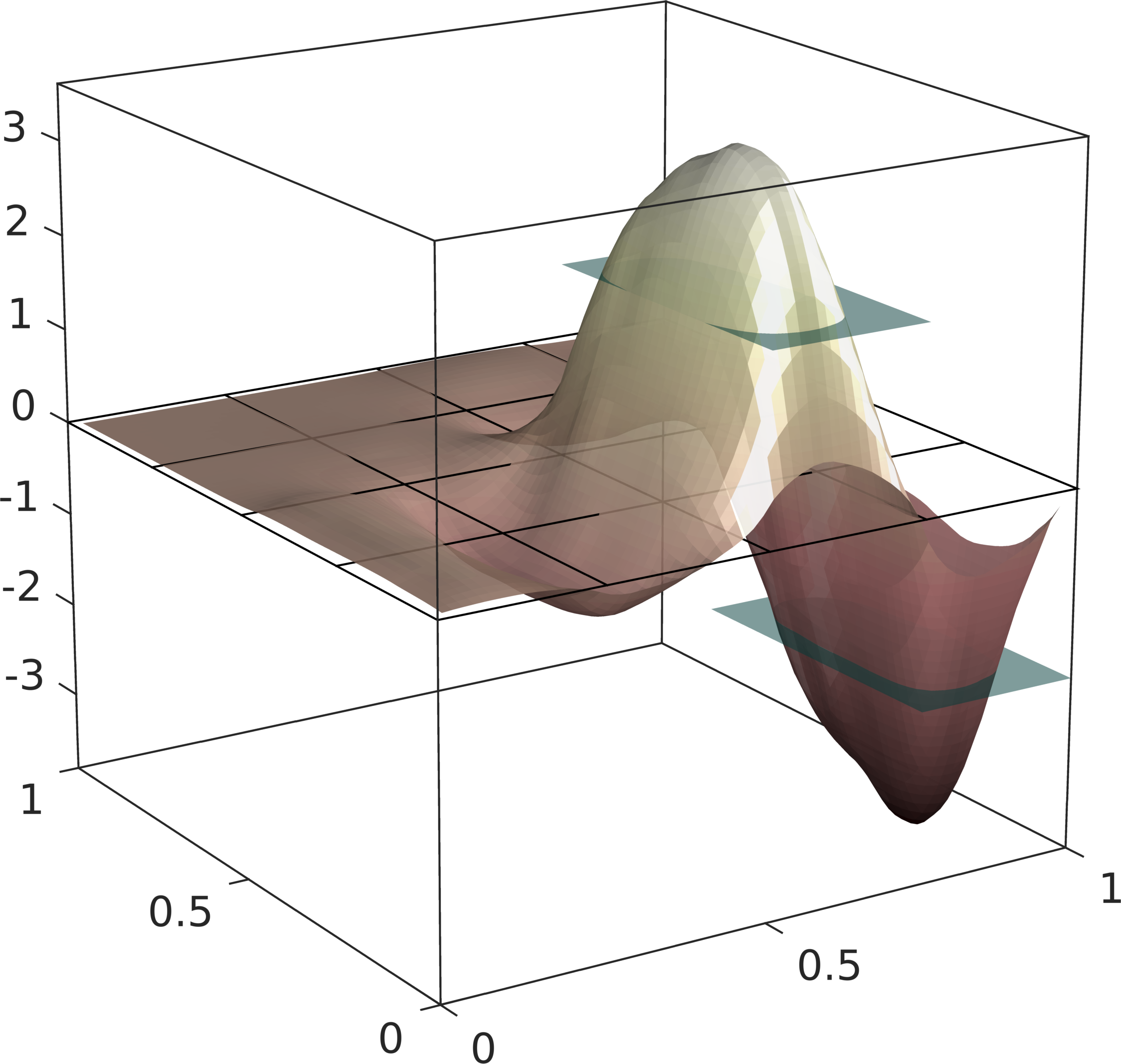}
	\includegraphics[height=0.118\textheight]{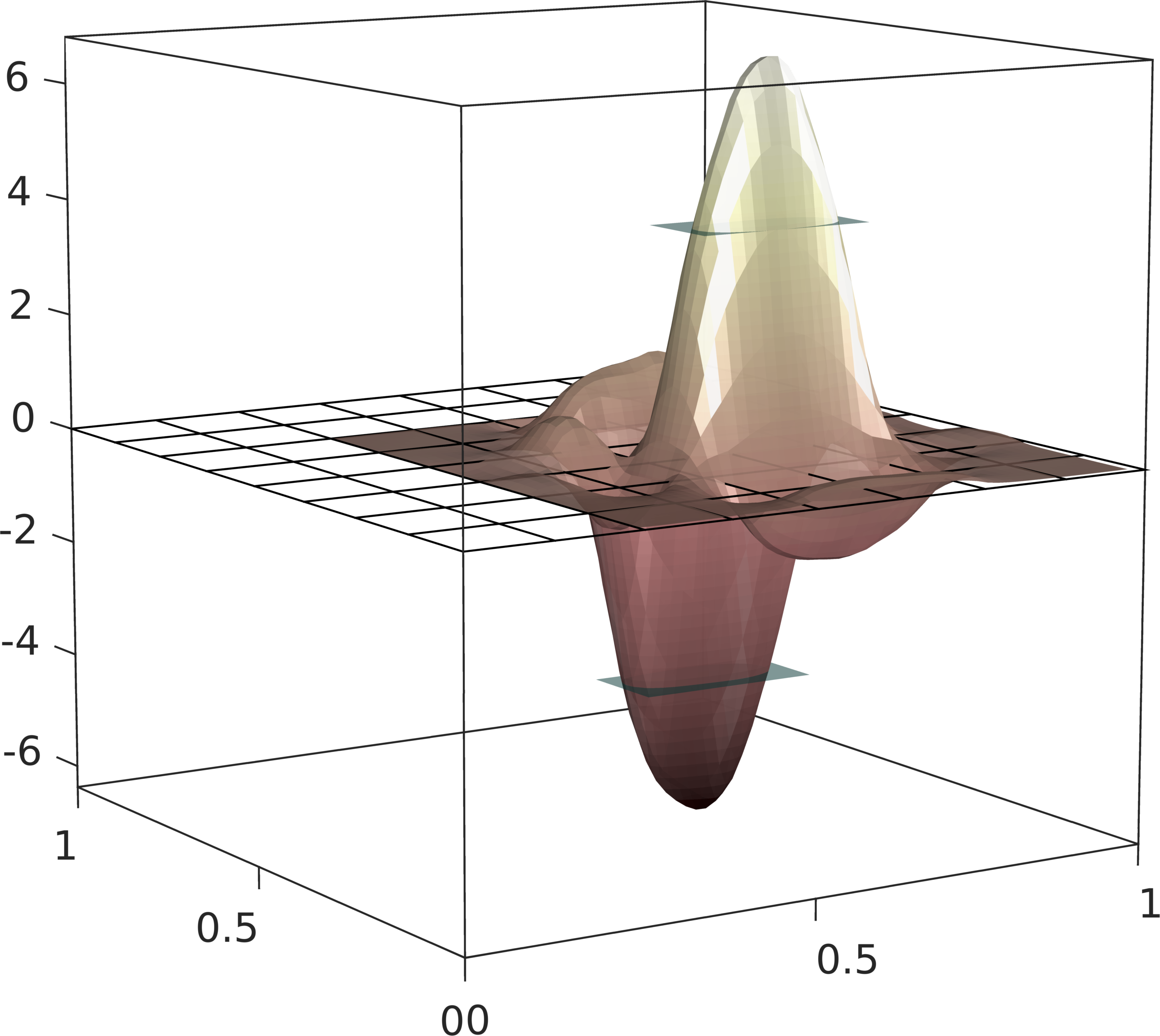}	\includegraphics[height=0.118\textheight]{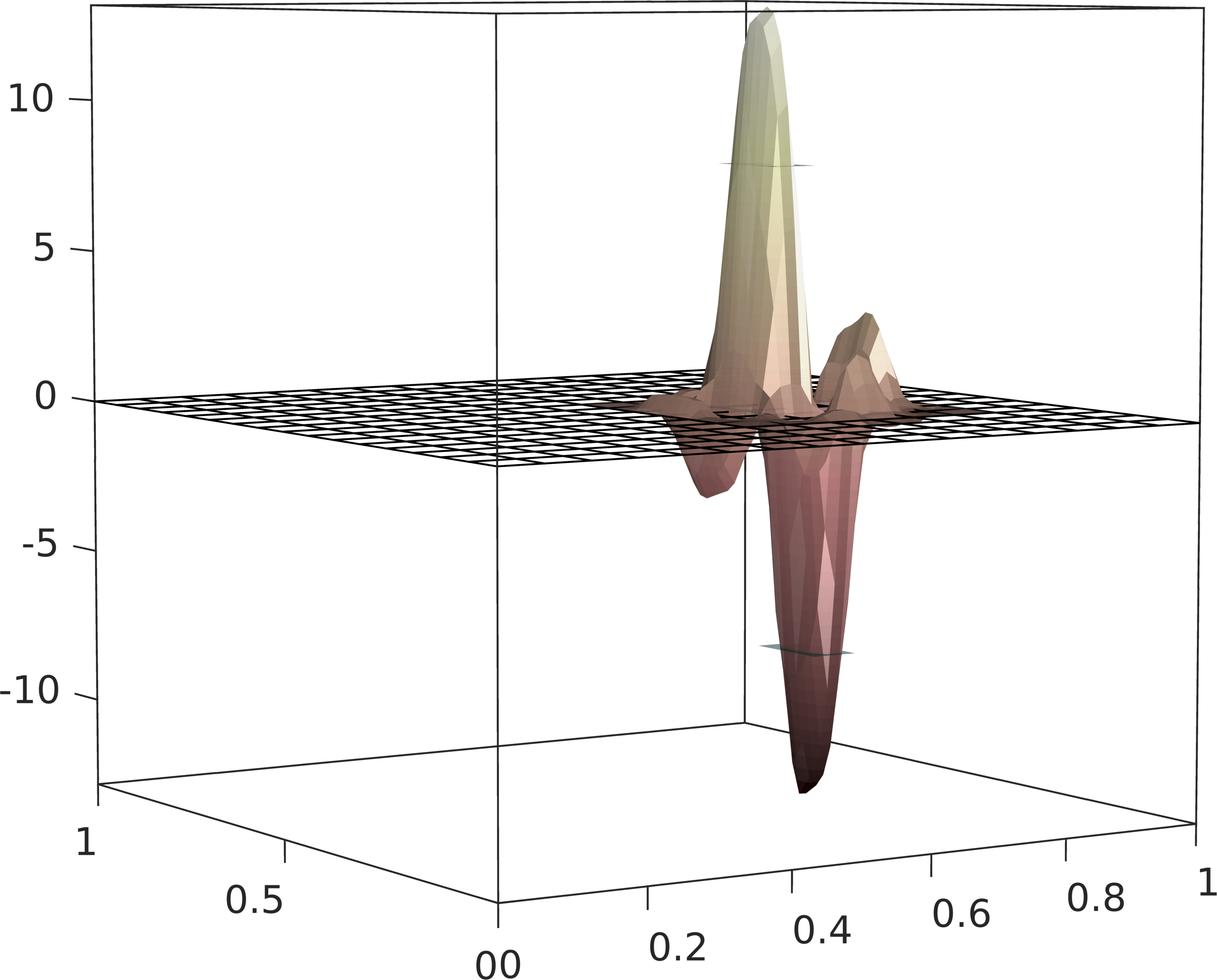}\\\caption{Realization of coefficient-adapted hierarchical decomposition in $1D$ (top row) and $2D$ (bottom row) based on an elliptic partial differential operator with random coefficient i.i.d. with respect to Cartesian grid of width $\varepsilon=2^{-6}$. Five levels from coarse (left) to fine (right). Green lines/surfaces represent classical Haar wavelets.}
	\label{fig:gamblet1d}
\end{figure}
Due to the lack of $V$-conformity, the basis $\HH$ is not suited for approximating the solution of model problem \eqref{e:modelweak} in a Galerkin approach. It will, however, serve as a companion of certain regularized hierarchical bases $\BBb(\omega)=\bigcup_{\ell=0}^L\BBb_\ell(\omega)\subset V$ to be defined below. The new bases are connected to $\HH$ (and to each other) via $L^2$-orthogonal projections $\Pi_\ell\colon V\to P^0(\TT_\ell)$ onto $\TT_\ell$-piecewise constant functions by
\begin{equation}\label{eq:BBb1}
\Pi_\ell\BBb_\ell(\omega)=\HH_\ell
\end{equation} 
for all $\ell = 0,1,\ldots,L$ and $\omega\in\Omega$.
Among the infinitely many possible choices, we define the elements of  $\BBb_\ell(\omega)$ by minimizing the energies $\tfrac{1}{2}\ab_\omega(\bullet,\bullet)$ in the closed affine space of preimages of $\Pi_\ell$ restricted to $V$, i.e., given $\phi\in\HH_\ell$ and $\omega\in\Omega$, we define 	$\bb_\phi(\omega)\in\BBb_\ell(\omega)$ by
\begin{align}\label{eq:defb}
	\bb_\phi(\omega):=\underset{v\in V}{\rm argmin}\;\tfrac{1}{2}\ab_\omega(v,v)\quad\text{subject to}\quad\Pi_\ell v = \phi.
\end{align}
This construction is strongly inspired by numerical homogenization where this sort of orthogonalization of scales in the energy space paved the way to a scheme that works with arbitrary rough coefficients beyond periodicity or scale separation \cite{MalP14,HenningPeterseim2013,Peterseim2016,KorPY18}. While most results in the context of this so-called localized orthogonal decomposition (LOD) are based on a conforming companion (the Faber basis), early works also addressed the possibility of using discontinuous companions \cite{doi:10.1137/120863162,Elfverson.Georgoulis.Mlqvist.ea:2012}. This dG version of LOD is very useful when taking the step from two levels or scales in numerical homogenization to actual multilevel decomposition. This was first shown in \cite{gamblets} where so-called gamblets are introduced; see also \cite{2017arXiv170602205S,MR3783084,MR3736719,OWHADI201799,MR3971243}. In particular, piecewise constants induce a natural hierarchical structure with nested kernels of local projection operators (here the $\Pi_\ell$) that is not easily achieved with $H^1$-conforming functions. The construction of the present paper coincides with the gamblet decomposition of \cite{gamblets} in the sense that the approximation spaces on all levels coincide in some idealized deterministic setting. However, our particular choice of basis is connected to the Haar-wavelets which decouples the definition and computation of the basis across levels. More importantly, our particular choice of basis is crucial in the context of the random problem at hand because it is exactly the link to the deterministic Haar basis that allows a meaningful interpretation of the averaged approximate solution operator. 
\smallskip

We shall express the mapping of bases encoded in \eqref{eq:BBb1}--\eqref{eq:defb} in terms of two concatenated linear operators. This will be useful for both analysis and actual computations. First, let $\tilde\Pi_\ell:L^2(D)\to V$ be such that 
\begin{equation}\label{e:tildephi1}
\Pi_\ell \circ \tilde\Pi_\ell = \id\quad\text{on }{\rm span}\HH_\ell
\end{equation}
In particular, this means that $\tilde\Pi_\ell$ maps any $\phi\in\HH_\ell$ to some  function that is admissible in the sense of the minimization problem \eqref{eq:defb}.
The operators $\tilde\Pi_\ell$ are easily constructed using non-negative bubble functions $\tilde\chi_T$ supported on an element $T\in\TT_\ell$ with $\Pi_\ell\tilde\chi_T=\chi_T$. Then 
$$\tilde\Pi_\ell v = \sum_{T\in\TT_\ell}(\Pi_\ell v)\vert_T\tilde\chi_T.$$
There is even locality in the sense of
\begin{equation}
	{\rm supp}\,\tilde\Pi_\ell\phi\subset {\rm supp}\,\phi\label{e:tildephi2}
\end{equation}
for all $\phi\in{\rm span}\HH_\ell$. The bubbles can be chosen such that, for some $C>0$,
\begin{equation}\label{e:tildephi3}
\|\tilde\Pi_\ell\phi\|_{H^{m}(D)}\leq C h^{-m} \|\phi\|_{L^2(D)}
\end{equation}	
holds for $m\in\{0,1\}$.

The second step involves $\ab_\omega$-orthogonal projections $\CCb_\ell(\omega)$ onto the closed subspaces 
\begin{equation}\label{eq:Well}
W_\ell:={\rm  kernel}(\Pi_\ell\vert_{V}) = {\rm  kernel}(\tilde\Pi_\ell\vert_{V}) 
\end{equation}
of $V$. Given any $u \in V$, define $\CCb_\ell(\omega)u\in W_\ell$ as the unique solution of the variational problem 
\begin{align}\label{eq:psi}
\ab_\omega(\CCb_\ell(\omega)u,v)=\ab_\omega(u,v)\quad\text{for all }v\in W_\ell.
\end{align}
With the two operators $\tilde\Pi_\ell$ and $\CCb_\ell$ we rewrite \eqref{eq:defb} as 
\begin{equation*}
\bb_\phi = (\id-\CCb_\ell)\tilde\Pi_\ell\phi
\end{equation*}
for all $\phi\in \HH_\ell$ and $\ell=0,1,\ldots,L$. Actually, for any $\omega\in\Omega$, $(\id-\CCb_\ell(\omega))\tilde\Pi_\ell$ defines a bijection from $\HH$ to $\BBb(\omega)$ with left inverse $\Pi_\ell$. 

While the $L^2$-orthogonality of the Haar basis is not preserved under these mappings, we have achieved $\ab$-orthogonality between the levels of the hierarchies as shown in the following lemma.
\begin{lemma}[$\ab$-orthogonality and scaling of $\BB$]\label{lem:bbasis}
	Any two functions $b_k\in \BB_k(\omega)$ and $b_\ell\in \BB_\ell(\omega)$ with $k\neq\ell$ satisfy $$\ab_\omega(b_k,b_\ell)=0.$$
	Moreover, 
	\begin{align}\label{eq:normequiv}
C^{-1}\|\phi_k\|_{L^2(D)}\leq C^{-1}\|b_k\|_{L^2(D)}\leq h_k \enorm{b_k}_{\omega} \leq C \|\phi_k\|_{L^2(D)}
	\end{align} 
	with some generic constant $C>0$ independent of the mesh sizes and the event. 
\end{lemma}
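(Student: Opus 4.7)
The statement has two parts: cross-level $\ab$-orthogonality and the three-term norm equivalence. The plan is to prove orthogonality by showing that $b_\ell$ lies in $W_k$ whenever $k<\ell$, and then to invoke the Galerkin property built into the definition of $b_k$ via $\CCb_k$. The norm equivalence will follow from the mean-zero structure inherited from the Haar basis, combined with Poincaré on elements of $\TT_{k-1}$ and the approximation bound \eqref{e:tildephi3} for $\tilde\Pi_k$.

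For the orthogonality, assume without loss of generality $k<\ell$. Note that $\TT_\ell$ refines $\TT_k$, so $P^0(\TT_k)\subset P^0(\TT_\ell)$ and therefore $W_\ell\subset W_k$. Writing $b_\ell=\tilde\Pi_\ell\phi_\ell-\CCb_\ell\tilde\Pi_\ell\phi_\ell$ with $\phi_\ell\in\HH_\ell$, the second summand lies in $W_\ell\subset W_k$ by construction. For the first summand, $\tilde\Pi_\ell\phi_\ell$ is supported in $\operatorname{supp}\phi_\ell\subset T^\star$ for some $T^\star\in\TT_{\ell-1}$ by \eqref{e:tildephi2}; since $\ell-1\ge k$, this $T^\star$ is contained in one element of $\TT_k$, and the Haar mean-zero property $\int\phi_\ell\,dx=0$ combined with $\Pi_\ell\tilde\Pi_\ell\phi_\ell=\phi_\ell$ gives $\int_T\tilde\Pi_\ell\phi_\ell\,dx=0$ for every $T\in\TT_k$, i.e.\ $\tilde\Pi_\ell\phi_\ell\in W_k$. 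Hence $b_\ell\in W_k$. Testing the variational identity that defines $\CCb_k$ with $v=b_\ell$ yields
\begin{equation*}
\ab_\omega(b_k,b_\ell)=\ab_\omega(\tilde\Pi_k\phi_k,b_\ell)-\ab_\omega(\CCb_k\tilde\Pi_k\phi_k,b_\ell)=0.
\end{equation*}

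For the chain \eqref{eq:normequiv}, the first inequality $\|\phi_k\|_{L^2}\le\|b_k\|_{L^2}$ is immediate because $\Pi_k$ is an $L^2$-orthogonal projection and $\Pi_k b_k=\phi_k$. The upper bound $h_k\enorm{b_k}_\omega\le C\|\phi_k\|_{L^2}$ uses that $b_k$ is the energy minimizer among preimages of $\phi_k$ under $\Pi_k$, so $\tilde\Pi_k\phi_k$ is an admissible competitor and
\begin{equation*}
\enorm{b_k}_\omega^2\le\enorm{\tilde\Pi_k\phi_k}_\omega^2\le\gamma_{\max}\|\nabla\tilde\Pi_k\phi_k\|_{L^2(D)}^2\le C\,h_k^{-2}\|\phi_k\|_{L^2(D)}^2
\end{equation*}
by \eqref{e:tildephi3}. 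The middle inequality $\|b_k\|_{L^2}\le Ch_k\enorm{b_k}_\omega$ rests on a Poincaré argument on the scale $h_{k-1}$: for $k\ge 1$ each $\phi\in\HH_k$ has vanishing integral over its parent element $T\in\TT_{k-1}$, and since $\Pi_k b_k=\phi_k$ we get $\int_T b_k\,dx=\int_T\phi_k\,dx=0$ for every $T\in\TT_{k-1}$; element-wise Poincaré together with uniform mesh grading $h_{k-1}\le C h_k$ then gives $\|b_k\|_{L^2(D)}\le Ch_k\|\nabla b_k\|_{L^2(D)}\le C\gamma_{\min}^{-1/2}h_k\enorm{b_k}_\omega$. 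The case $k=0$ is handled separately by Friedrichs' inequality on $D$ with $h_0\simeq\mathrm{diam}(D)$.

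The main technical point is the first part: one must track how the Haar cancellation $\int\phi_\ell=0$ and the locality property \eqref{e:tildephi2} of $\tilde\Pi_\ell$ combine to place the auxiliary admissible lift $\tilde\Pi_\ell\phi_\ell$ into the coarser kernel $W_k$; everything else is bookkeeping. Once this is observed, both claims reduce to the defining Galerkin identity for $\CCb_k$ and standard scaling.
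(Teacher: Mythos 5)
Your proof is correct and follows essentially the same route as the paper: you establish $\BBb_\ell(\omega)\subset W_k$ for $k<\ell$ (the paper does this in one line via $\Pi_k b_\ell=\Pi_k\Pi_\ell b_\ell=\Pi_k\phi_\ell=0$, you do it summand by summand) and then invoke the Galerkin identity \eqref{eq:psi} for $\CCb_k$, while the norm chain \eqref{eq:normequiv} is proved by exactly the paper's argument ($L^2$-projection bound, Poincar\'e on $\TT_{k-1}$ using $\Pi_{k-1}b_k=0$, energy minimality against the competitor $\tilde\Pi_k\phi_k$, and \eqref{e:tildephi3}). Your explicit treatment of the $k=0$ case via Friedrichs' inequality is a small but welcome addition that the paper leaves implicit.
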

\begin{proof}
	Since \begin{equation}\label{e:pik1}\Pi_k(\id-\CCb_\ell(\omega))\tilde\Pi_\ell\HH_\ell=\Pi_k\Pi_\ell(\id-\CCb_\ell(\omega))\tilde\Pi_\ell\HH_\ell=\Pi_k\HH_\ell=\{0\}
	\end{equation} whenever $k<\ell$, we have that 
	$$\BBb_\ell(\omega)\subset W_k.$$
	This and the orthogonality 
	$$\ab_\omega(\BBb_k(\omega),W_k)=0$$
	from \eqref{eq:psi}	proves the (block-)orthogonality of the bases. 
	The scaling follows from $\Pi_{k-1}\BBb_k=\{0\}$ (which is a special instance of \eqref{e:pik1}), the Poincar\'e inequality, \eqref{e:tildephi3}, and the construction. More precisely, 
	\begin{align}\label{e:scaling}
	\begin{split}
	\|\phi_k\|_{L^2(D)}&=\|\Pi_k b_k\|_{L^2(D)}\leq\|b_k\|_{L^2(D)}=\|(1-\Pi_{k-1})b_k\|_{L^2(D)}\lesssim h_k\enorm{b_k}_{\omega}\\& = h_k\enorm{(1-\CCb(\omega))\tilde\Pi_k\phi_k}_{\omega} \leq h_k \enorm{\tilde\Pi_k\phi_k}_{\omega}\lesssim \|\phi_k\|_{L^2(D)}.
	\end{split}
	\end{align}
	This concludes the proof.
\end{proof}
We shall emphasize that, in general, the basis elements $\bb_\phi(\omega)$ have global support in $D$. However, their moduli decay exponentially away from ${\rm supp}\,\phi$ in scales of $h_\ell$,
\begin{equation}\label{e:decay}
\|\bb_\phi(\omega)\|_{H^1(D\setminus B_R({\rm supp}\phi))}\leq C e^{-cR/h_\ell}\|\bb_\phi(\omega)\|_{H^1(D)}
\end{equation}
with some generic constants $c,C>0$ that solely depend on the contrast $\gamma_{\operatorname{max}}/\gamma_{\operatorname{min}}$ and the shape regularity of the mesh $\TT_\ell$ (and thus on $\TT_0$) but not on the mesh size. This is a well established result of numerical homogenization since \cite{MalP14} and valid in many different settings (see \cite{Peterseim2016} and references therein). Here, we will provide some elements of a more recent constructive proof of the decay that provides local approximations by the theory of preconditioned iterative solvers \cite{KorPY18} which in turn is based on \cite{KorY16}.  

We start with introducing an overlapping decomposition of $D$ that we will later use to define the local preconditioner. Let the level $\ell\in\{0,1,\ldots,L\}$ and the event $\omega\in\Omega$ be arbitrary but fixed. For any element of the mesh, define the patch 
$$D_T:=\bigcup\{K\in\TT_\ell\;|\;\bar K\cap \bar T\neq\emptyset\}$$
and a corresponding local subspace
\[
V_T
:= \big\{ v\in V\ |\ v=0 \text{ in }  D\setminus D_T  \big\}\subset V.
\]
Note that $V_T$ is equal to $H^1_0(D_T)$ up to extension by zero outside of $D_T$.  Let $\lambda_T$, $T\in\TT_\ell$, be a partition of unity
with $\operatorname{supp}\lambda_T\subset D_T$ and $\|\lambda_T\|_{W^{m,\infty}(D)}\lesssim h_\ell^{-m}$, $m=0,1$.
Under the complementary projection $(\id-\tilde\Pi_\ell)$ these subspaces are turned into subspaces
\[
W_T 
:= (\id-\tilde\Pi_\ell)V_T = \big\{ v\in W_\ell\ |\ v=0 \text{ in } D\setminus D_T  \big\}
\]
of $W_\ell$. For each $T\in \TT_\ell$ we define the corresponding $\ab_\omega$-orthogonal projection $\PPb_T(\omega)\colon V\to W_T\subset W_\ell\subset V$ by the variational problem
\[
\ab_\omega(\PPb_T(\omega) u, w) = \ab_\omega(u,w)\quad\text{for all }w\in W_T.
\]
The sum of these local Ritz projections  
\begin{align}
\label{def_calP}
\PPb_\ell(\omega) 
:= \sum\nolimits_{T\in\TT_\ell} \PPb_T(\omega)
\end{align}
defines a bounded linear operator from $V$ to $W_\ell$ that can be seen as a preconditioned version of the correction operator $\CCb_\ell(\omega)$. The operator $\PPb_\ell(\omega)$ is quasi-local with respect to the mesh $\TT_\ell$ since information can only propagate over distances of order $h_\ell$ each time $\PPb_\ell(\omega)$ is applied. 

The remaining part of this section aims to show that the preconditioned operators $\PPb_\ell(\omega)$ serve well within iterative solvers for linear equations. Following the abstract theory for subspace correction or additive Schwarz methods for operator equations \cite{KorY16} (see also \cite{Xu92,yserentant_1993} for the matrix case) we need to verify that the energy norm of a function $u\in V$ can be bounded in terms of the sum of local contributions from $V_T$ and, for one specific decomposition, we need a reverse estimate.
\begin{lemma}\label{lem:K1K2}
	For every decomposition $u = \sum_{T\in\TT_\ell} u_T$ of $u\in W_\ell$ with $u_T \in W_T$
	we have 
	\[
	\|\nabla u\|^2_{L^2(D)}
	\le K_2\,\sum\nolimits_{z\in\TT_\ell} \|\nabla u_T\|^2_{L^2(D)}
	\]
	with constant $K_2>0$ depending only on the shape regularity of $\TT_\ell$ (and thus on $\TT_0$). With the partition of unity functions $\lambda_T$ associated with the elements $T\in\TT_\ell$, the one decomposition $\sum_{T\in \TT_\ell} u_T = u$
	with $u_T := (1-\tilde\Pi_\ell)(\lambda_T u)\in W_T$ for  $T\in \TT_\ell$ satisfies
	\[
	\sum\nolimits_{T\in\TT_\ell} \|\nabla u_T\|_{L^2(D)}^2\leq K_1\, \|\nabla u \|_{L^2(D)}^2 
	\]
	with constant $K_1>0$ that only depends on the shape regularity of $\TT_\ell$ and the contrast $\gamma_{\operatorname{max}}/\gamma_{\operatorname{min}}$.
\end{lemma}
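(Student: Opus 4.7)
The plan is to treat the two inequalities separately. The upper bound is a standard consequence of the finite overlap of the patches $\{D_T\}_{T\in\TT_\ell}$, while the lower bound would hinge on the partition-of-unity splitting combined with the element-wise Poincar\'e inequality for functions in $W_\ell$ and the local stability of $\tilde\Pi_\ell$ recorded in \eqref{e:tildephi2}--\eqref{e:tildephi3}.

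For the upper bound, I would first use that shape regularity of $\TT_\ell$ forces every point $x\in D$ to lie in at most a uniformly bounded number $N$ of patches $D_T$. Since $\operatorname{supp} u_T\subset D_T$, at most $N$ summands in $\nabla u(x)=\sum_T\nabla u_T(x)$ are nonzero at any $x$, so a pointwise Cauchy--Schwarz bound followed by integration over $D$ yields the estimate with $K_2=N$.

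For the lower bound, I would first verify that the proposed splitting is admissible, i.e.\ $u_T\in W_T$ and $\sum_T u_T=u$. Since $\lambda_T u$ is supported in $D_T$, the explicit bubble representation $\tilde\Pi_\ell v=\sum_{T'\in\TT_\ell}(\Pi_\ell v)|_{T'}\tilde\chi_{T'}$ keeps $\tilde\Pi_\ell(\lambda_T u)$ supported in $D_T$, which gives $\operatorname{supp}\,u_T\subset D_T$. The left-inverse identity \eqref{e:tildephi1} then yields $\Pi_\ell u_T=0$, so $u_T\in W_T$; summation together with $\sum_T\lambda_T\equiv 1$ and $u\in W_\ell=\ker(\tilde\Pi_\ell|_V)$ from \eqref{eq:Well} gives $\sum_T u_T=(\id-\tilde\Pi_\ell)u=u$. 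To bound the individual pieces I would use the product rule to control $\|\nabla(\lambda_T u)\|_{L^2(D_T)}$ by $\|\nabla u\|_{L^2(D_T)}+h_\ell^{-1}\|u\|_{L^2(D_T)}$, and then invoke the hypothesis $\Pi_\ell u=0$: since $u$ has zero mean on every element $T'\in\TT_\ell$, the element-wise Poincar\'e inequality gives $\|u\|_{L^2(T')}\lesssim h_\ell\|\nabla u\|_{L^2(T')}$, which after summation yields $\|u\|_{L^2(D_T)}\lesssim h_\ell\|\nabla u\|_{L^2(D_T)}$ and collapses the product-rule bound to $\lesssim\|\nabla u\|_{L^2(D_T)}$. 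The $H^1$-stability \eqref{e:tildephi3} with $m=1$ then bounds the correction by $\|\nabla\tilde\Pi_\ell(\lambda_T u)\|_{L^2(D)}\lesssim h_\ell^{-1}\|u\|_{L^2(D_T)}\lesssim\|\nabla u\|_{L^2(D_T)}$. Squaring, summing over $T\in\TT_\ell$, and using finite overlap one more time on the right-hand side closes the proof.

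The main obstacle is to arrange simultaneously that the localized pieces $u_T$ lie in $W_T$ (annihilated by $\Pi_\ell$ \emph{and} supported in $D_T$) while still summing to $u$. This is exactly what the bubble-based construction of $\tilde\Pi_\ell$ was set up to deliver: support preservation \eqref{e:tildephi2}, the left-inverse identity \eqref{e:tildephi1}, and the inverse-type stability \eqref{e:tildephi3} that compensates precisely the $h_\ell^{-1}$ factor produced by differentiating the partition of unity. The dependence of $K_1$ on the contrast $\gamma_{\operatorname{max}}/\gamma_{\operatorname{min}}$ mentioned in the statement only comes in through the switch between the $H^1$ seminorm used here and the sample-dependent energy norm $\enorm{\cdot}_\omega$ that enters the subspace-correction analysis of the subsequent sections.
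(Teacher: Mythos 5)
Your proposal is correct and follows essentially the same route as the paper: the first bound via finite overlap of the patches $D_T$ and pointwise Cauchy--Schwarz, and the second via the product rule, the element-wise Poincar\'e inequality for $u\in\ker\Pi_\ell$, and the inverse-type stability \eqref{e:tildephi3} of $\tilde\Pi_\ell$. The paper merely sketches the second estimate and defers the details to \cite[Lemma~3.1]{KorPY18}; your write-up fills in exactly those details, including the admissibility check of the splitting.
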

\begin{proof}
With $K_2$ the maximum number of elements of $\TT_\ell$ covered by one patch $D_T$ for $T\in \TT_\ell$, we can estimate on a single element $T'$,  
	\[
	\|\nabla u\|_{L^2(T')}^2 
	= \| \sum_{T\in\TT_\ell} \nabla u_T \|_{L^2(T')}^2
	\le K_2\,\sum_{T\in\TT_\ell} \| \nabla u_T \|_{L^2(T')}^2.
	\]
Due to shape regularity of $\TT_\ell$, $K_2$ is independent of $h_\ell$. A summation over all $T'$ yields the first inequality. The second one follows from the $H^1$-stability of $\tilde\Pi_\ell$ on $W_\ell$, the product rule, \eqref{e:tildephi3}, and the Poincar\'e inequality. For further details, we refer to \cite[Lemma 3.1]{KorPY18} where these results are proved in detail in a very similar setting.
\end{proof}
Lemma~\ref{lem:K1K2} implies that 
\begin{equation}
1/K_1 \ab_\omega(v,v)\leq \ab_\omega(\PPb_\ell(\omega)v,v)\leq K_2\ab_\omega(v,v)
\end{equation}
holds for functions $v$ in the kernel $W_\ell$ of $\Pi_\ell\vert_V$ and any $\omega\in\Omega$ (cf. \cite[Eq. (3.11)]{KorPY18}). Following the construction of \cite{KorY16,KorPY18} there exists a localized linear approximation $\CCb_\ell^\delta(\omega)$ based on $\OO(\log(1/\delta))$ steps of some linear iterative solver applied to the preconditioned corrector problems \cite[Eqns. (3.8) or (3.18)]{KorPY18} such that 
\begin{equation}\label{eq:decay}
\|\nabla(\CCb_\ell(\omega)u-\CCb_\ell^\delta(\omega)u)\|_{L^2(D)}\leq \delta\|\nabla\CCb_\ell(\omega)u\|_{L^2(D)};
\end{equation}
see \cite[Lemma 3.2]{KorPY18}. With the approximate correctors, we can define modified (localized) bases
$$\BB^\delta(\omega):=\bigcup_{\ell=0}^L \BB_\ell^\delta(\omega):=\bigcup_{\ell=0}^L\set{\bb_\phi^\delta(\omega)}{\phi\in\HH_\ell},$$ 
where
\begin{align*}
	\bb_\phi^\delta(\omega):=(\id-\CCb_\ell^\delta(\omega))\tilde\Pi_\ell\phi
\end{align*}
for $\phi\in\HH_\ell$.
The previous discussion shows that there exist constants 
$C_1,C_2>0$ that only depend on the shape regularity of the meshes $\TT_\ell$ and the contrast $\gamma_{\operatorname{max}}/\gamma_{\operatorname{min}}$ of the coefficients such that 
\begin{align}\label{eq:bbapprox}
\enorm{\bb_\phi(\omega)-\bb_\phi^\delta(\omega)}_\omega\leq C_1 \delta\enorm{\bb_\phi(\omega)}_\omega
\end{align}
while 
\begin{equation}\label{eq:bsupport}
{\rm supp}\;\bb_\phi^\delta(\omega)\subset \set{x\in D}{{\rm dist}(x,{\rm supp}\;\phi)\leq C_2|\log(\delta)|h_\ell}.
\end{equation}
Later on we will typically use normalized bases. 
	Since 
	\begin{align}\label{eq:normequiv2}
	(1-C_1\delta)\enorm{\bb_\phi(\omega)}_\omega\leq\enorm{\bb_\phi^\delta(\omega)}_\omega \leq (1+C_1\delta)\enorm{\bb_\phi(\omega)}_\omega
	\end{align}
	by \eqref{eq:bbapprox}, the normalization of the localized bases is meaningful whenever if $\delta< 1/C_1$. Normalization does not affect the local supports \eqref{eq:bsupport} and the approximation property \eqref{eq:bbapprox} is preserved in the following sense, 
\begin{align}\label{eq:bbapproxnorm}
\begin{split}
\enorm{\frac{\bb_\phi(\omega)}{\enorm{\bb_\phi(\omega)}_\omega}-\frac{\bb_\phi^\delta(\omega)}{\enorm{\bb_\phi^\delta(\omega)}_\omega}}_\omega
&\leq \frac{\enorm{\bb_\phi(\omega)-\bb_\phi^\delta(\omega)}_\omega}{\enorm{\bb_\phi(\omega)}_\omega} +
\frac{\enorm{\bb_\phi(\omega)}_\omega-\enorm{\bb_\phi^\delta(\omega)}_\omega}{\enorm{\bb_\phi^\delta(\omega)}_\omega}
\\
&\leq\delta C_1 +\frac{C_1\delta}{1-C_1\delta}\leq 3 \delta C_1.
\end{split}
\end{align}
for any $\delta\leq 1/(2C_1)$.

\section{Sparse stiffness matrices}\label{sec:hierarchical}
With the localized bases of the previous section, we can now study the sparsity of corresponding stiffness matrices and their inverses. We define the level function $\level(\cdot)$ according to the Haar basis by $\level(\bb)=\level(\bb^\delta)=\level(\phi)=\ell$ for $\bb=\bb_\phi\in \BB_\ell(\omega)$, $b^\delta=b^\delta_\phi\in \BB_\ell^\delta(\omega)$ and $\phi\in\HH_\ell$.
We order the basis functions in $\BB$, $\BB^\delta$, and $\HH$ such that $\level$ is monotonically increasing in the index running from $1$ to  $N:=\#\BB=\#\BB^\delta =\#\HH$. With this convention, we may also write  $\level(i):=\level(\bb_i)=\level(\bb_i^\delta)=\level(\phi_i)$ for all $i=1,\ldots,N$.
Moreover, we define a (semi-)metric $d(\cdot,\cdot)$ on $\{1,\ldots,N\}$ by
\begin{align*}
	d(i,j):=
		\frac{{\rm dist}({\rm mid}(\phi_i),{\rm mid}(\phi_j))}{h_{\min\{\level(i),\level(j)\}}},
\end{align*}
where ${\rm mid}(w)$ defines the barycenter of ${\rm supp}(w)$.

Define the stiffness matrices $\Sb(\omega)\in \R^{N\times N}$ associated with the bases $\BBb(\omega)$ by
$$\Sb(\omega)_{ij}:= \ab_\omega\biggl(\frac{\bb_j(\omega)}{\enorm{\bb_j(\omega)}_\omega},\frac{\bb_i}{\enorm{\bb_i(\omega)}_\omega}\biggr).$$
The orthogonality of the bases $\BBb$ motivates the approximation of the stiffness matrices by block-diagonal ones even after localization. 
Given $1/C_1>\delta>0$, define the block-diagonal stiffness matrices $\Sb^\delta(\omega)\in \R^{N\times N}$ by
\begin{align*}
\Sb^\delta(\omega)_{ij}:=\begin{cases}
\ab_\omega\bigl(\frac{\bb_j^\delta(\omega)}{\enorm{\bb_j^\delta(\omega)}_\omega},\frac{\bb_i^\delta(\omega)}{\enorm{\bb_i^\delta(\omega)}_\omega}\bigr)&\text{for }\level(i)=\level(j),\\
0&\text{else.}
\end{cases}
\end{align*}
In the following, we use the spectral norm $\norm{\cdot}{2}$, i.e., the matrix norm induced by the Euclidean norm.
\begin{lemma}\label{lem:stiffness}
There exists a constant $C>0$ that depends only on $D$ and the shape regularity of $\TT_0$ and the contrast $\gamma_{\operatorname{max}}/\gamma_{\operatorname{min}}$ such that, for any $\omega\in\Omega$ and for all $\delta\leq 1/(2C_1)$,  $$\norm{\Sb(\omega)-\Sb^\delta(\omega)}{2}\leq C\delta.$$
Moreover, there exists a constant $\zeta>0$ which depends only on $D$ such that
	\begin{align}\label{eq:banded}
		 d(i,j)>\zeta(|\log(\delta)|+1) \text{ or }\level(i)\neq \level(j)\quad\implies\quad \Sb_{ij}^\delta(\omega)=0,
	\end{align}
in particular, the number of nonzero entries $\operatorname{nnz}(\Sb^\delta(\omega))\lesssim N(1+|\log\delta|)^{d}$ is bounded uniformly in $\omega$.
\end{lemma}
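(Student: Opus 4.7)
The plan exploits the fact that both $\Sb(\omega)$ and $\Sb^\delta(\omega)$ share the same block-diagonal structure across levels: the off-diagonal blocks of $\Sb(\omega)$ vanish by the $\ab_\omega$-orthogonality established in Lemma~\ref{lem:bbasis}, while $\Sb^\delta(\omega)$ is block-diagonal by definition. Hence $\Sb(\omega)-\Sb^\delta(\omega)$ is itself block-diagonal, and the problem reduces to proving $\norm{\Sb_\ell(\omega)-\Sb_\ell^\delta(\omega)}{2}\leq C\delta$ uniformly in the level $\ell$ for the diagonal blocks $\Sb_\ell(\omega)$ and $\Sb_\ell^\delta(\omega)$.

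For fixed $\ell$ and unit vectors $\bv,\bw\in\R^{N_\ell}$ supported on level~$\ell$, I introduce the reconstructions $u^0:=\sum_i v_i\,\bb_i(\omega)/\enorm{\bb_i(\omega)}_\omega$ and $u^\delta:=\sum_i v_i\,\bb_i^\delta(\omega)/\enorm{\bb_i^\delta(\omega)}_\omega$, and similarly $w^0,w^\delta$, so that $\bw^T(\Sb_\ell-\Sb_\ell^\delta)\bv = \ab_\omega(u^0-u^\delta,w^0) + \ab_\omega(u^\delta,w^0-w^\delta)$. By Cauchy-Schwarz it then suffices to prove the stability bound $\enorm{u^0}_\omega+\enorm{u^\delta}_\omega\lesssim\|\bv\|_2$ together with the approximation bound $\enorm{u^0-u^\delta}_\omega\lesssim\delta\|\bv\|_2$ (and analogously for $w$). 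The pivotal observation is the rewriting $u^0 = (\id-\CCb_\ell(\omega))\tilde\Pi_\ell\psi$ for the single Haar combination $\psi:=\sum_i v_i\phi_i/\enorm{\bb_i(\omega)}_\omega$: since $\id-\CCb_\ell(\omega)$ is an $\ab_\omega$-orthogonal projection and by \eqref{e:tildephi3}, one gets $\enorm{u^0}_\omega\lesssim h_\ell^{-1}\|\psi\|_{L^2(D)}$, which by the $L^2$-orthogonality of $\HH_\ell$ combined with the scaling $\enorm{\bb_i(\omega)}_\omega\sim h_\ell^{-1}\|\phi_i\|_{L^2(D)}$ from \eqref{eq:normequiv} reduces to $\lesssim\|\bv\|_2$. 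The bound for $u^\delta$ follows analogously via the norm equivalence \eqref{eq:normequiv2}. For the approximation bound, I decompose $u^0-u^\delta = (\id-\CCb_\ell)\tilde\Pi_\ell(\psi-\psi^\delta) + (\CCb_\ell^\delta-\CCb_\ell)\tilde\Pi_\ell\psi^\delta$ with $\psi^\delta := \sum_i v_i \phi_i /\enorm{\bb_i^\delta(\omega)}_\omega$; the first summand is controlled via $|\enorm{\bb_i(\omega)}_\omega^{-1}-\enorm{\bb_i^\delta(\omega)}_\omega^{-1}|\lesssim\delta\,\enorm{\bb_i(\omega)}_\omega^{-1}$ (from \eqref{eq:normequiv2}), the second directly by \eqref{eq:decay}.

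The sparsity assertion \eqref{eq:banded} is a consequence of the support estimate \eqref{eq:bsupport}: the support of $\bb_i^\delta$ is contained in a tubular neighborhood of width $C_2 h_{\level(i)}(|\log\delta|+1)$ around ${\rm supp}\,\phi_i$, so that if $d(i,j)>\zeta(|\log\delta|+1)$ at the same level $\ell$ for an appropriate geometric constant $\zeta$, then ${\rm supp}\,\bb_i^\delta\cap{\rm supp}\,\bb_j^\delta=\emptyset$ and consequently $\Sb_{ij}^\delta(\omega)=0$. Shape regularity of $\TT_\ell$ bounds the number of indices $j$ at level $\ell$ with $d(i,j)\leq\zeta(|\log\delta|+1)$ by $C(1+|\log\delta|)^d$, and summing over the $N$ rows gives $\operatorname{nnz}(\Sb^\delta(\omega))\lesssim N(1+|\log\delta|)^d$.

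The main obstacle is avoiding a logarithmic loss in the spectral-norm bound of the first estimate: a direct entrywise bound $|(\Sb_\ell-\Sb_\ell^\delta)_{ij}|\lesssim\delta$ combined with the banded sparsity from \eqref{eq:banded} would only yield $\norm{\Sb_\ell-\Sb_\ell^\delta}{2}\lesssim\delta(1+|\log\delta|)^d$. The Haar-basis reformulation that packages all normalized coefficients into the single $L^2$-function $\psi$ is precisely what circumvents this: the $L^2$-orthogonality of $\HH_\ell$ makes the stability and approximation constants independent of the number of basis functions at level~$\ell$, thereby promoting the pointwise perturbation of size $\delta$ into a genuine spectral-norm bound of size $\delta$.
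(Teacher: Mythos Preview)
Your proof is correct and follows essentially the same route as the paper: reduce to diagonal blocks via the $\ab_\omega$-orthogonality of Lemma~\ref{lem:bbasis}, then bound $\bw^T(\Sb_\ell-\Sb_\ell^\delta)\bv$ by an add--subtract argument (the paper phrases this via an intermediate matrix $\widetilde\Sb^\delta$ with mixed exact/localized entries, which is equivalent to your splitting $\ab_\omega(u^0-u^\delta,w^0)+\ab_\omega(u^\delta,w^0-w^\delta)$). The only difference is presentational: you spell out the level-wise Riesz stability explicitly via the Haar combination $\psi$ and the scaling \eqref{eq:normequiv}, whereas the paper's proof cites \eqref{eq:normequiv2} and implicitly relies on the single-level Riesz argument that is only written out later in Lemma~\ref{lem:riesz}.
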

\begin{proof}
	The sparsity of the diagonal blocks follows from \eqref{eq:bsupport}. For the proof of the error bound, define 
	\begin{align*}
	\widetilde \Sb^\delta(\omega)_{ij}:=\begin{cases}
	\ab_\omega\bigl(\frac{\bb_j(\omega)}{\enorm{\bb_j(\omega)}_\omega},\frac{\bb_i^\delta(\omega)}{\enorm{\bb_i^\delta(\omega)}_\omega}\bigr)&\text{for }\level(i)=\level(j),\\
	0&\text{else.}
	\end{cases}
	\end{align*}
 Since $	|\Sb_{ij}(\omega)-\widetilde \Sb_{ij}^\delta(\omega)|=0$ whenever $\level(i)\neq \level(j)$ it suffices to bound the errors related to the diagonal blocks indexed by $\ell=1,2,\ldots,L.$
	We have for any vectors $x,y\in\mathbb{R}^{\#\BB_\ell^\delta}$ that
	\begin{multline*}
	|x\cdot(\Sb_\ell(\omega)-\widetilde \Sb_\ell^\delta(\omega))y| \\=\biggl|\sum_{\level(i)=\ell}\sum_{\level(j)=\ell}x_iy_i\ab_\omega\Bigl(\frac{\bb_i(\omega)}{\enorm{\bb_i(\omega)}_\omega},
	\frac{\bb_i^\delta(\omega)}{\enorm{\bb_i^\delta(\omega)}_\omega}-\frac{\bb_i(\omega)}{\enorm{\bb_i(\omega)}_\omega}\Bigr)\biggr|
	\lesssim \delta \norm{x}{\ell_2}\norm{y}{\ell_2}
	\end{multline*}
	by \eqref{eq:normequiv2}. 
	The same arguments show $|x\cdot(\Sb_\ell^\delta(\omega)-\widetilde \Sb_\ell^\delta(\omega))y|\lesssim \delta\norm{x}{\ell_2}\norm{y}{\ell_2}$ and the triangle inequality readily proves the assertion.
\end{proof}

\begin{lemma}\label{lem:riesz}For any $\omega\in\Omega$ the normalized set $\BB=\BBb(\omega)$ or $\BB=\BBb^\delta(\omega)$ (with $\delta \lesssim 1/L$ sufficiently small) is a Riesz bases in the sense that
	\begin{align}\label{eq:riesz}
C^{-1}\sum_{b\in\BB}\alpha_{b}^2\leq	\biggl\|\;\sum_{b\in\BB}\alpha_{b} \frac{b}{\enorm{b}_\omega}\biggr\|_{H^1(D)}^2\leq C\sum_{b\in\BB}\alpha_{b}^2
	\end{align}	
	holds with some constant $C>0$ which depends only on $D$ and the shape regularity of $\TT_0$ and the contrast $\gamma_{\operatorname{max}}/\gamma_{\operatorname{min}}$. This immediately implies that $\Sb(\omega)$ and $\Sb^\delta(\omega)$ are uniformly well conditioned.
\end{lemma}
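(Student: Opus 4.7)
The plan is first to reduce the claim to the energy norm $\enorm{\cdot}_\omega$, which is equivalent to $\|\cdot\|_{H^1(D)}$ on $V=H^1_0(D)$ with constants depending only on $\gamma_{\min}$, $\gamma_{\max}$ and the Friedrichs constant of $D$. It then suffices to prove
$$\enorm{\,\sum_{b\in\BB}\alpha_b\, b/\enorm{b}_\omega\,}_\omega^2\sim\sum_{b\in\BB}\alpha_b^2.$$

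\textbf{Unperturbed basis $\BB=\BBb(\omega)$.} The $\ab_\omega$-orthogonality between levels provided by Lemma~\ref{lem:bbasis} reduces the task to a single level: setting $u_\ell:=\sum_{b\in\BB_\ell}\alpha_b\, b/\enorm{b}_\omega$ one has $\enorm{u}_\omega^2=\sum_\ell\enorm{u_\ell}_\omega^2$. Since each element of $\BB_\ell$ solves the minimization problem~\eqref{eq:defb}, the linear combination $u_\ell$ is itself the energy minimizer subject to the constraint $\Pi_\ell u_\ell=\sum_b \alpha_b\phi_b/\enorm{b}_\omega$. Comparison with the admissible competitor $\tilde\Pi_\ell\Pi_\ell u_\ell$ together with~\eqref{e:tildephi3} gives $\enorm{u_\ell}_\omega^2\lesssim h_\ell^{-2}\|\Pi_\ell u_\ell\|_{L^2(D)}^2$. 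The $L^2$-orthogonality of $\HH_\ell$ from Lemma~\ref{lem:haar} together with the scaling $\enorm{b}_\omega\sim h_\ell^{-1}\|\phi_b\|_{L^2(D)}$ from~\eqref{eq:normequiv} yields $\|\Pi_\ell u_\ell\|_{L^2(D)}^2\sim h_\ell^2\sum_b\alpha_b^2$, which gives the upper bound $\enorm{u_\ell}_\omega^2\lesssim\sum_b\alpha_b^2$. For the lower bound, $u_\ell\in W_{\ell-1}$ (each $b\in\BB_\ell$ lies in $W_{\ell-1}$), so an elementwise Poincar\'e inequality on $\TT_{\ell-1}$ gives $\|\Pi_\ell u_\ell\|_{L^2(D)}\le\|u_\ell\|_{L^2(D)}\lesssim h_\ell\enorm{u_\ell}_\omega$, which combined with the $L^2$-identity above delivers $\sum_b\alpha_b^2\lesssim\enorm{u_\ell}_\omega^2$. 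Summation over $\ell$ finishes this case.

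\textbf{Localized basis $\BB^\delta=\BBb^\delta(\omega)$.} Here a perturbation argument is applied to the synthesis operators $T,T^\delta\colon\ell^2\to(V,\enorm{\cdot}_\omega)$ of the two normalized bases. By the previous step $T$ is bounded above and below with constants that only depend on $D$, $\TT_0$ and the contrast. The approximation estimate~\eqref{eq:bbapproxnorm} shows that the individual differences $e_b:=b/\enorm{b}_\omega-b^\delta/\enorm{b^\delta}_\omega$ satisfy $\enorm{e_b}_\omega\le 3\delta C_1$. At a fixed level $\ell$, the localization~\eqref{eq:bsupport} and the exponential decay~\eqref{e:decay} combine to give a finite-overlap bound $\sum_{b'\in\BB_\ell}|\ab_\omega(e_b,e_{b'})|\lesssim|\log\delta|^d(3\delta C_1)^2$, uniformly in $b\in\BB_\ell$. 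Young's inequality then yields $\enorm{\sum_{b\in\BB_\ell}\alpha_b e_b}_\omega^2\lesssim\delta^2|\log\delta|^d\sum_{b\in\BB_\ell}\alpha_b^2$. Summing over the $L+1$ levels by the triangle inequality and Cauchy--Schwarz gives
$$\|T-T^\delta\|\lesssim\delta\sqrt{L}\,|\log\delta|^{d/2},$$
which is strictly less than the lower Riesz constant of $T$ once $\delta\lesssim 1/L$. A Neumann-series perturbation then transfers the Riesz bounds from $T$ to $T^\delta$, and the uniform conditioning of $\Sb(\omega)$ and $\Sb^\delta(\omega)$ follows as the Gram-matrix reformulation.

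\textbf{Main obstacle.} The delicate step is the per-level finite-overlap estimate for the $e_b$. While $b^\delta$ is genuinely local, the untruncated basis function $b$ has global support with only exponential decay, so a naive application of Cauchy--Schwarz over pairs of elements loses a factor of $\sqrt{\#\BB_\ell}$. The remedy is to split each pairing $\ab_\omega(e_b,e_{b'})$ into a near-field part (at most $\lesssim|\log\delta|^d$ overlapping neighbours by~\eqref{eq:bsupport}) and a far-field tail that is exponentially small in ${\rm dist}(\phi_b,\phi_{b'})/h_\ell$ via~\eqref{e:decay}; the latter sums geometrically and is absorbed into the constant. This is exactly what dictates the smallness assumption $\delta\lesssim 1/L$ in the statement.
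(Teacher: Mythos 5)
Your treatment of $\BB=\BBb(\omega)$ is essentially the paper's own argument: reduce to the energy norm, use the cross-level $\ab_\omega$-orthogonality of Lemma~\ref{lem:bbasis} to work level by level, and combine the $L^2$-orthogonality of $\HH_\ell$ with the scaling \eqref{eq:normequiv}, the competitor $\tilde\Pi_\ell$ together with \eqref{e:tildephi3} for the upper bound, and $\Pi_{\ell-1}u_\ell=0$ plus the elementwise Poincar\'e inequality for the lower bound. That half is correct.

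For $\BB=\BBb^\delta(\omega)$ you take a genuinely different route (a global perturbation bound on the synthesis operator rather than the paper's per-level diagonal control plus cross-level near-orthogonality), and the step it hinges on is not justified as written. You need the per-level estimate $\enorm{\sum_{b\in\BB_\ell}\alpha_b e_b}_\omega^2\lesssim\delta^2|\log\delta|^d\|\alpha\|_{\ell^2}^2$ and propose to obtain it from a Schur/row-sum bound on the Gram matrix of the $e_b$, with a far field that ``sums geometrically.'' But for a separated pair $(b,b')$ the honest estimate from your ingredients is $|\ab_\omega(e_b,e_{b'})|\lesssim\delta\,e^{-cR/h_\ell}+\enorm{e_b}_{\omega}\big|_{D\setminus N_b}\cdot\enorm{e_{b'}}_{\omega}\big|_{D\setminus N_{b'}}$, where $N_b$ is the truncation neighbourhood of $b$: the second term is the interaction of the two global tails over their common far region and is only bounded by $\delta^2$, \emph{independently of the separation}. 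Summed over the $O(h_\ell^{-d})$ functions in a row this contributes $\delta^2 h_\ell^{-d}$, which at the finest level is of order $\delta^{2-d}$ and is not small for $d\geq 2$; so the far field does not sum geometrically without a further dyadic-annulus localization of both tails and an almost-orthogonality argument that you have not supplied. The repair is much simpler and is what the paper uses: $b-b^\delta=(\CCb_\ell-\CCb_\ell^\delta)\tilde\Pi_\ell\phi_b$ is the image of one linear operator with $\enorm{(\CCb_\ell-\CCb_\ell^\delta)v}_\omega\lesssim\delta\enorm{v}_\omega$ by \eqref{eq:decay}, so $\enorm{\sum_b\alpha_b(b-b^\delta)/\enorm{b}_\omega}_\omega=\enorm{(\CCb_\ell-\CCb_\ell^\delta)\tilde\Pi_\ell\sum_b\alpha_b\phi_b/\enorm{b}_\omega}_\omega\lesssim\delta\|\alpha\|_{\ell^2}$ follows in one line from \eqref{e:tildephi3} and \eqref{eq:normequiv}, with no locality argument at all. (The paper organizes the conclusion slightly differently---per-level control via Lemma~\ref{lem:stiffness} plus a $\lesssim\delta$ bound on each of the $L^2$ cross-level blocks, giving the factor $1\pm C\delta L$ and the condition $\delta\lesssim 1/L$---but this operator bound is the engine.) With that substitution your perturbation argument for the synthesis operator closes, and the final Neumann-series/triangle-inequality step and the Gram-matrix conclusion are fine.
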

\begin{proof}		
	Since $\norm{\cdot}{H^1(D)}$ and $\enorm{\cdot}_{\omega}$  are equivalent uniformly in $\omega$ and the basis $\BBb(\omega)$ is $\ab_\omega(\cdot,\cdot)$-orthogonal 
	across the levels, it suffices to consider one level $k\in\{1,\ldots,L\}$ in the case $\BB=\BBb(\omega)$. The $L^2(D)$-orthogonality of the Haar basis and the construction of $\BB$ implies 
	\begin{multline*}
	\sum_{b\in\BB_k}\alpha_{b}^2 =
	\biggl\|	
	\sum_{b\in\BB_k}\alpha_{b}\frac{\phi_{b}}{\|\phi_{b}\|_{L^2(D)}}\biggr\|_{L^2(D)}^2
	=
	\biggl\|\Pi_k	\sum_{b\in\BB_k}\alpha_{b}\frac{b}{\|\phi_{b}\|_{L^2(D)}}\biggr\|_{L^2(D)}^2\\\leq
	\biggl\|(1-\Pi_{k-1})\sum_{b\in\BB_k}\alpha_{b}\frac{b}{\|\phi_{b}\|_{L^2(D)}}\biggr\|_{L^2(D)}^2\lesssim 	\biggl\|\sum_{b\in\BB_k}\alpha_{b}\frac{b}{\enorm{b}_\omega}\biggr\|_{H^1(D)}^2,
	\end{multline*} 
	where the last estimate follows from the Poincar\'e inequality and \eqref{eq:normequiv}. For the proof of the converse direction, the construction of $\BB$ and boundedness of $\CCb_k$ show
	\begin{multline*} 	\biggl\|\sum_{b\in\BB_k}\alpha_{b}\frac{b}{\enorm{b}_\omega}\biggr\|_{H^1(D)}^2=
	\biggl\|(\id-\CCb_k(\omega))\tilde\Pi_k\sum_{b\in\BB_k}\alpha_{b}\frac{b}{\enorm{b}_\omega}\biggr\|_{H^1(D)}^2\lesssim 	\biggl\|\tilde\Pi_k\sum_{b\in\BB_k}\alpha_{b}\frac{\phi_{b}}{\enorm{b}_\omega}\biggr\|_{H^1(D)}^2\\
	\lesssim 	\biggl\|\sum_{b\in\BB_k}\alpha_{b}\frac{\phi_{b}}{\|\phi_{b}\|_{L^2(D)}}\biggr\|_{L^2(D)}^2
	=	\sum_{b\in\BB_k}\alpha_{b}^2,
	\end{multline*}
		where the second inequality follows from the inverse inequality 	\eqref{e:tildephi3} and \eqref{eq:normequiv}.
		
The result for $\BB=\BBb^\delta(\omega)$ is slightly more involved as the $\ab_\omega(\cdot,\cdot)$-orthogonal 
across the levels is lost. In a first step, Lemma~\ref{lem:stiffness} and the equivalence of $\enorm{\cdot}_\omega$ and $\norm{\cdot}{H^1(D)}$ imply
	\begin{align*}
	 \biggl\|\;\sum_{b^\delta\in\BBb^\delta_\ell(\omega)}\alpha_{b^\delta} \frac{b^\delta}{\enorm{b^\delta}_\omega}\biggr\|_{H^1(D)}^2\simeq \Sb_\ell^\delta(\omega)\alpha\cdot \alpha\simeq 
	 \Sb_\ell(\omega)\alpha\cdot \alpha \pm C\delta\norm{\alpha}{\ell_2}^2\simeq (1\pm C\delta)\norm{\alpha}{\ell_2}^2
	\end{align*}
with the constant $C$ from Lemma~\ref{lem:stiffness} and $\delta\leq 1/(2C)$. The second step concerns the quantification of non-orthogonality. The estimate~\eqref{eq:decay} and the norm equivalence $\enorm{\cdot}_\omega\simeq \norm{\cdot}{H^1(D)}$ imply
	\begin{align*}
	 \enorm{(\CCb_\ell^\delta-\CCb_\ell)v}_\omega \lesssim \delta \enorm{v}_\omega\quad\text{for all }v\in H^1_0(D).
	\end{align*}
    Consequently, we obtain as in the proof of Lemma~\ref{lem:stiffness},  for some $1\leq k\neq\ell\leq L$
	\begin{align*}
	 \ab_\omega&\Big(\sum_{b^\delta\in\BBb^\delta_\ell(\omega)}\alpha_{b^\delta} \frac{b^\delta}{\enorm{b^\delta}_\omega},
	 \sum_{b\in\BBb_k(\omega)}\beta_{b} \frac{b}{\enorm{b}_\omega}\Big)\\
	&=
	 \ab_\omega\Big((\CCb_\ell^\delta-\CCb_\ell)\tilde\Pi_\ell\sum_{b_\phi^\delta\in\BBb^\delta_\ell(\omega)}\beta_{b_\phi^\delta} \frac{\phi}{\enorm{b_\phi^\delta}_\omega},
	 \sum_{b\in\BBb_k(\omega)}\alpha_{b} \frac{b}{\enorm{b}_\omega}\Big)\\
	 &\lesssim \delta\norm{\alpha}{\ell_2}\norm{\beta}{\ell_2},
	\end{align*}
where we used the orthogonality across levels and the stability of $\BBb$. Symmetry of the argument concludes $\ab_\omega\Big(\sum_{b^\delta\in\BBb^\delta_\ell(\omega)}\alpha_{b^\delta} \frac{b^\delta}{\enorm{b^\delta}_\omega},
	 \sum_{b\in\BBb_k(\omega)}\beta_{b} \frac{b}{\enorm{b}_\omega}\Big)\lesssim \delta\norm{\alpha}{\ell_2}\norm{\beta}{\ell_2}$ and we find
	\begin{align*}
	  \biggl\|\;\sum_{b^\delta\in\BBb^\delta(\omega)}\alpha_{b^\delta} \frac{b^\delta}{\enorm{b^\delta}_\omega}\biggr\|_{H^1(D)}^2&\simeq 
	  \sum_{\ell=1}^L (1\pm \delta)\norm{\alpha|_{\BBb^\delta_\ell(\omega)}}{\ell_2}^2 \pm \delta\sum_{i,j=1\atop i\neq j}^L\norm{\alpha|_{\BBb^\delta_i(\omega)}}{\ell_2}\norm{\alpha|_{\BBb^\delta_j(\omega)}}{\ell_2}\\
	  &\simeq (1 \pm C\delta L) \norm{\alpha}{\ell_2}^2.
	\end{align*}
	for sufficiently small $\delta\leq1/(2CL)$. This concludes the proof.
\end{proof}

\section{Basis transformations}\label{sec:transform}
This section analyzes the properties of a certain matrix representation of the $L^2(D)$-orthogonal projections $\Pi_\ell\colon L^2(D)\to {\rm span}(\bigcup_{j=1}^\ell\HH_\ell)$ for $\ell=1,\ldots,L$. 
Given $\omega\in\Omega$, define the matrix $\Tb(\omega)\in\R^{N\times N}$ by
 $$\Tb_{ij}(\omega):=\frac{(\bb_j(\omega),\phi_i)_{L^2(D)}}{\enorm{\bb_j(\omega)}_\omega\norm{\phi_i}{L^2(D)}^2}.$$
Given some $v=\sum_{i=1}^N \alpha_i \frac{\bb_i}{\enorm{\bb_i}_\omega}$ with $\Pi_L v = \sum_{i=1}^N \beta_i \frac{\phi_i}{\norm{\phi_i}{L^2(D)}}$.
Then, by definition
\begin{align*}
\beta_i = \frac{(\Pi_L v , \phi_i)}{\norm{\phi_i}{L^2(D)}^2} = \sum_{j=1}^N \alpha_j  \Tb_{ij}=(\Tb\alpha)_i,
\end{align*}
i.e., $\beta = \Tb(\omega)\alpha$. 
Given $\delta>0$, a truncated approximation $\Tb^\delta(\omega)$ of $\Tb(\omega)$ is defined by
\begin{equation*}
\Tb^\delta_{ij}(\omega):=\begin{cases}\frac{(\bb_j^\delta(\omega),\phi_i)_{L^2(D)}}{\enorm{\bb^\delta_j(\omega)}_{\omega}\norm{\phi_i}{L^2(D)}^2}& \text{if } \level(j)\leq \level(i),\\
0&\text{otherwise},
\end{cases} 
\end{equation*}
for any $i,j\in\{1,\ldots,N\}$. $\Tb^\delta(\omega)$ is a sparse lower block-triangular matrix and the next lemma shows that the error of truncation is at most proportional to $\delta$. To explore the block-structure of matrices we shall introduce the following notation first. For any matrix $K\in\R^{N\times N}$, we define sub-blocks $K_{(k,\ell)}\in \R^{\#\HH_\ell\times \#\HH_k}$ according to the level structure by
\begin{align*}
 K_{(k,\ell)}:=K|_{\set{(i,j)}{\level(i)=k,\,\level(j)=\ell}}.
\end{align*}
Thus, we may write
\begin{align*}
 K=\begin{pmatrix}
    K_{(0,0)} & K_{(0,1)} & \cdots & K_{(0,L)}\\
    K_{(1,0)} & K_{(1,1)} & \cdots  & K_{(1,L)}\\
    \vdots & \vdots &\ddots & \vdots\\
    K_{(L,0)} & K_{(L,1)} & \cdots & K_{(L,L)}
   \end{pmatrix}.
\end{align*}

\begin{lemma}\label{lem:basistrans}
For $\delta>0$ as in Lemma~\ref{lem:riesz}, there holds
$$\norm{\Tb(\omega)-\Tb^\delta(\omega)}{2}\leq CL\delta$$
and, for $0\leq \ell\leq k\leq L$, there holds
\begin{align}\label{eq:Tdecay}
\norm{\Tb^\delta(\omega)_{(k,\ell)}}{2}\leq Ch_k.
\end{align} 
Moreover, $\Tb^\delta$ is lower block-triangular with sparse blocks, more precisely,
	\begin{align*}
		\big(\level(j)\geq \level(i)\text{ and }i\neq j\big)\text{ or } d(i,j)>\zeta(1+|\log(\delta)|)\quad
		\implies\quad \Tb^\delta_{ij}=0,
	\end{align*}
where $\zeta>0$ is the bandwidth from Lemma~\ref{lem:stiffness}. 
The number of nonzero entries per block is bounded by $\operatorname{nnz}(\Tb^\delta(\omega)_{(k,\ell)})
\lesssim \#\HH_k(1+|\log\delta|)^{d}$.
The constant $C>0$ depends only on $D$, the shape regularity of $\TT_0$ and the contrast 
$\gamma_{\operatorname{max}}/\gamma_{\operatorname{min}}$.
\end{lemma}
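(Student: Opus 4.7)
My plan is to derive all four claims by identifying matrix entries of $\Tb$ and $\Tb^\delta$ with $L^2$-Haar expansion coefficients of functions in the (possibly localized) basis $\BBb$. A key preliminary is that $\Tb$ itself is \emph{exactly} lower block-triangular, so the truncation in $\Tb^\delta$ only discards entries generated by the perturbation $\bb\mapsto\bb^\delta$.

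\medskip

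\textbf{Exact lower block-triangularity.} If $\level(j)>\level(i)$, then $\bb_j\in\BBb_{\level(j)}(\omega)$ satisfies $\Pi_{\level(i)}\bb_j=0$ by the same reasoning as in \eqref{e:pik1}: the nesting $P^0(\TT_{\level(i)})\subset P^0(\TT_{\level(j)-1})$ gives $\Pi_{\level(i)}=\Pi_{\level(i)}\Pi_{\level(j)-1}$, and the inner projection annihilates $\bb_j$. Since $\phi_i\in P^0(\TT_{\level(i)})$ lies in the range of $\Pi_{\level(i)}$, this yields $(\bb_j,\phi_i)_{L^2(D)}=0$ and hence $\Tb_{ij}=0$. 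Consequently $\Tb-\Tb^\delta$ is supported on $\{(i,j):\level(j)\leq\level(i)\}$ with entries
\[
(\Tb-\Tb^\delta)_{ij}=\frac{(\tilde b_j,\phi_i)_{L^2(D)}}{\norm{\phi_i}{L^2(D)}^2},\qquad \tilde b_j:=\frac{\bb_j}{\enorm{\bb_j}_\omega}-\frac{\bb_j^\delta}{\enorm{\bb_j^\delta}_\omega}.
\]

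\medskip

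\textbf{Block interpretation and $\norm{\Tb^\delta_{(k,\ell)}}{2}\leq Ch_k$.} For a coefficient vector $\alpha^{(\ell)}$ supported at a single level $\ell$, set $v^{(\ell),\delta}:=\sum_{j:\level(j)=\ell}\alpha_j\bb_j^\delta/\enorm{\bb_j^\delta}_\omega$; Lemma~\ref{lem:riesz} applied at a single level yields $\|v^{(\ell),\delta}\|_{H^1(D)}\lesssim\|\alpha^{(\ell)}\|_{\ell_2}$. Unfolding the definition,
\[
(\Tb^\delta_{(k,\ell)}\alpha^{(\ell)})_i=\frac{(v^{(\ell),\delta},\phi_i)_{L^2(D)}}{\norm{\phi_i}{L^2(D)}^2}
\]
is precisely the $i$-th coefficient of $(\Pi_k-\Pi_{k-1})v^{(\ell),\delta}$ in the Haar basis. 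By $L^2$-orthogonality of $\HH_k$ (with the normalization of $\HH$ that makes $\Tb$ a genuine $\ell_2\to\ell_2$ operator), the Euclidean norm of this block-vector is comparable to $\|(\Pi_k-\Pi_{k-1})v^{(\ell),\delta}\|_{L^2(D)}$. For $k>\ell$ the standard piecewise-constant approximation $\|(I-\Pi_k)u\|_{L^2(D)}\lesssim h_k\|u\|_{H^1(D)}$ delivers the stated $h_k$-scaling; for $k=\ell$, use $\Pi_\ell\bb_j^\delta=\phi_j$, $\Pi_{\ell-1}\bb_j^\delta=0$ (from $\CCb_\ell^\delta:V\to W_\ell$ and \eqref{eq:Well}) together with the normalization $\enorm{\bb_j^\delta}_\omega\sim h_\ell^{-1}\norm{\phi_j}{L^2(D)}$ read off from \eqref{eq:normequiv}, \eqref{eq:normequiv2}.

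\medskip

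\textbf{Per-level $\delta$ bound and $\norm{\Tb-\Tb^\delta}{2}\leq CL\delta$.} Using the identities $\bb_j=(\id-\CCb_\ell)\tilde\Pi_\ell\phi_j$ and $\bb_j^\delta=(\id-\CCb_\ell^\delta)\tilde\Pi_\ell\phi_j$,
\[
\sum_{j:\level(j)=\ell}\alpha_j(\bb_j-\bb_j^\delta)=(\CCb_\ell^\delta-\CCb_\ell)\,\tilde\Pi_\ell\!\sum_{j:\level(j)=\ell}\alpha_j\phi_j.
\]
Combining the corrector bound \eqref{eq:decay}, the inverse estimate \eqref{e:tildephi3} for $\tilde\Pi_\ell$, the $L^2$-orthogonality yielding $\norm{\sum_j\alpha_j\phi_j}{L^2(D)}^2=\sum_j\alpha_j^2\norm{\phi_j}{L^2(D)}^2$, and the normalization $\enorm{\bb_j}_\omega\sim h_\ell^{-1}\norm{\phi_j}{L^2(D)}$, the polynomial $h_\ell$-factors cancel exactly and produce
\[
\biggl\|\!\sum_{j:\level(j)=\ell}\!\alpha_j\tilde b_j\biggr\|_{H^1(D)}\lesssim\delta\,\|\alpha^{(\ell)}\|_{\ell_2};
\]
the discrepancy between $\enorm{\bb_j}_\omega$ and $\enorm{\bb_j^\delta}_\omega$ costs only a multiplicative $1+\OO(\delta)$ factor via \eqref{eq:normequiv2}. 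Re-running the block-operator argument of the previous step with $v^{(\ell),\delta}$ replaced by $\sum_j\alpha_j\tilde b_j$ gives $\norm{(\Tb-\Tb^\delta)_{(k,\ell)}}{2}\lesssim\delta$ for every $0\leq\ell\leq k\leq L$. The block-Frobenius bound $\norm{M}{2}\leq(\sum_{k,\ell}\norm{M_{(k,\ell)}}{2}^2)^{1/2}$, applied to the $\OO(L^2)$ nonzero blocks, yields $\norm{\Tb-\Tb^\delta}{2}\lesssim L\delta$.

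\medskip

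\textbf{Sparsity and nonzero count.} The support bound \eqref{eq:bsupport} for $\bb_j^\delta$ combined with the locality of $\phi_i$ forces $(\bb_j^\delta,\phi_i)_{L^2(D)}=0$ whenever the two supports are separated by more than $C_2|\log\delta|h_{\level(j)}$. Expressed in the scaled metric $d$, this is exactly $d(i,j)>\zeta(1+|\log\delta|)\Rightarrow\Tb^\delta_{ij}=0$ with the same bandwidth constant $\zeta$ as in Lemma~\ref{lem:stiffness}. Counting elements of $\TT_{k-1}$ whose barycenters lie within the bandwidth of a fixed Haar function at level $\ell$ bounds the number of nonzero entries per column of block $(k,\ell)$ by $\OO((1+|\log\delta|)^d)$, whence the per-block count $\lesssim\#\HH_k(1+|\log\delta|)^d$.

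\medskip

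\textbf{Main obstacle.} The delicate step is the per-level $H^1$-error estimate: one must carefully verify the cancellation of the polynomial $h_\ell$-factors arising from the inverse inequality \eqref{e:tildephi3} and from the energy normalization $1/\enorm{\bb_j}_\omega$ against the $L^2$-mass of $\phi_j$, so that only the $\delta$-factor of \eqref{eq:decay} survives. Once this is settled, the remaining steps reduce to the Riesz property of Lemma~\ref{lem:riesz}, standard Haar approximation theory, and an elementary triangular block-matrix summation.
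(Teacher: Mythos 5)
Your proof is correct and follows essentially the same route as the paper: exact lower block-triangularity from the projection identity, locality from the support bound \eqref{eq:bsupport}, the $L\delta$ estimate by bounding each of the $\OO(L^2)$ blocks by $\delta$ via \eqref{eq:bbapproxnorm} (your more explicit per-level argument through the operator $(\CCb_\ell^\delta-\CCb_\ell)\tilde\Pi_\ell$ is precisely the device the paper deploys in the proof of Lemma~\ref{lem:riesz}), and \eqref{eq:Tdecay} from the vanishing moments of $\HH_k$ against $P^0(\TT_{k-1})$ combined with the Poincar\'e-type approximation estimate, which is the paper's duality argument in disguise. The only blemish is the intermediate per-column count in the final step, which for a column at level $\ell$ and a row block at level $k\geq\ell$ should be $\OO((1+|\log\delta|)^d(h_\ell/h_k)^d)$ rather than $\OO((1+|\log\delta|)^d)$; summing over the $\#\HH_\ell$ columns still yields the stated $\#\HH_k(1+|\log\delta|)^d$, so the conclusion is unaffected.
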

\begin{proof}
We see immediately $\Tb_{ij}(\omega)=0$ for all $\level(j)\geq \level(i)$ and $i\neq j$ since $$(\bb_j(\omega),\phi_i)_{L^2(D)}=(\Pi_{\level(\phi_i)} \bb_j(\omega), \phi_i)_{L^2(D)} = (\phi_j,\phi_i)_{L^2(D)}=0.$$ Since ${\rm supp}(\bb_i^\delta(\omega))\cap{\rm supp}(\phi_j)=\emptyset$ as soon as $d(i,j)\gtrsim |\log(\delta)|$, there is some $\zeta>0$ which depends only on $D$ such that  $\Tb^\delta(\omega)_{ij}=0$  for all $d(i,j)> \zeta(1+|\log(\delta)|)$.

For any vectors $x\in\mathbb{R}^{\#\BB_k^\delta}$ and $y\in\mathbb{R}^{\#\BB_\ell^\delta}$, we have
\begin{multline*}
	x\cdot(\Tb_{(k,\ell)}(\omega)-\Tb^\delta_{(k,\ell)}(\omega))y\\=\sum_{\level(i)=k}\sum_{\level(j)=\ell}x_iy_j\biggl(\frac{\phi_i}{\norm{\phi_i}{L^2(D)}}, 
	\frac{\bb_j^\delta(\omega)}{\enorm{\bb_j^\delta(\omega)}_\omega}-\frac{\bb_j(\omega)}{\enorm{\bb_j(\omega)}_\omega}\biggr)_{L^2(D)}\lesssim\delta\norm{x}{\ell_2}\norm{y}{\ell_2}
\end{multline*}
by Friedrichs' inequality and \eqref{eq:bbapproxnorm}.
This implies $\norm{\Tb(\omega)_{(\ell,k)}-\Tb^\delta(\omega)_{(\ell,k)}}{2}\lesssim \delta$.
Summing up over the levels proves $\norm{\Tb(\omega)-\Tb^\delta(\omega)}{2}\lesssim L\delta$.
 
To see~\eqref{eq:Tdecay}, note that $w:=\sum_{\phi_i\in\HH_k} \alpha_i \phi_i$ and $b:=\sum_{\bb_j(\omega)\in \BB_\ell}\beta_j\bb^\delta_j(\omega)$ satisfy 
	\begin{align*}
 \alpha^T \Tb^\delta(\omega)\beta&=(w , b)_{L^2(D)}=((1-\Pi_k)w , b)_{L^2(D)}=(w ,(1-\Pi_k) b)_{L^2(D)}\\&\lesssim h_k\norm{w}{L^2(D)}\enorm{b}_{\omega}\lesssim h_k\norm{\alpha}{\ell_2}\norm{\beta}{\ell_2}
\end{align*}
by Lemma~\ref{lem:riesz}. This concludes the proof.
\end{proof}

\section{Inverse stiffness matrices and averaging}\label{s:inverse}
This section proves that the inverse of the stiffness matrix $\Sb^\delta(\omega)$ (w.r.t. the coefficient adapted bases $\BBb^\delta(\omega)$) defined in the previous section can be efficiently approximated by a sparse matrix. 
One possibility to compute an approximate inverse of the matrix $\Sb^\delta(\omega)$ is to apply the conjugate gradient method (CG) to the matrix with unit vectors $e_i\in\R^N$
 as right-hand sides. 
 The sparsity pattern from Lemma~\ref{lem:stiffness} shows that one matrix-vector product with $\Sb^\delta e_i$ increases the number of non-zero entries to $\#\set{1\leq j\leq N}{d(i,j)\lesssim 1+|\log(\delta)|}$.
 Thus, after $k\in\N$ iterations of the CG method, the resulting vector has about $\#\set{1\leq j\leq N}{d(i,j)\lesssim k(1+|\log(\delta)|)}$ non-zero entries.
 Since the condition number $\kappa(\Sb^\delta)$ is uniformly bounded due to Lemma~\ref{lem:riesz}, the number of iterations grows only logarithmically in the desired accuracy $\delta$. 
Thus, the cost of $k\simeq 1+|\log(\delta)|$ iterations of the CG method to reach the accuracy can be bounded roughly by $ (1+|\log(\delta)|))^2$.
\begin{lemma}\label{lem:invblock}
For $\delta>0$ as in Lemma~\ref{lem:riesz}, there exists a matrix $\Rb^\delta(\omega)$ such that $\norm{\Sb(\omega)^{-1}-\Rb^\delta(\omega)}{2}\leq \delta$.
Moreover, $\Rb^\delta(\omega)$ satisfies
	\begin{align}\label{eq:invbanded}
d(i,j)>C_{\rm inv} \zeta(|\log(\delta)|^2+1) \text{ or }\level(i)\neq \level(j)\quad\implies\quad \Rb_{ij}^\delta(\omega)=0,
\end{align}
for $\zeta$ from Lemma~\ref{lem:stiffness} and 
$C_{\rm inv}>0$ depending only on $D$, the shape regularity of $\TT_0$ and the contrast 
$\gamma_{\operatorname{max}}/\gamma_{\operatorname{min}}$.
The number of non zero entries is bounded by ${\rm nnz}(\Rb^\delta)\lesssim N(1+|\log(\delta)|)^d$.
\end{lemma}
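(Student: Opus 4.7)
The plan is to construct $\Rb^\delta(\omega)$ column by column as an approximate inverse of the block-diagonal matrix $\Sb^\delta(\omega)$ by running a short Krylov iteration on each diagonal block, and then to use a perturbation identity to pass from $(\Sb^\delta)^{-1}$ back to $\Sb^{-1}$.

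First I would collect the ingredients already available. By Lemma~\ref{lem:stiffness}, $\Sb^\delta(\omega)$ is block-diagonal with respect to the level structure, with each diagonal block having $d$-bandwidth $\zeta(1+|\log\delta|)$. By Lemma~\ref{lem:riesz}, each block $\Sb^\delta_\ell(\omega)$ has a spectral condition number $\kappa$ bounded by a constant depending only on $D$, $\TT_0$, and the contrast. For every unit vector $e_j$ associated with $\phi_j\in\HH_\ell$ I apply $k$ steps of the conjugate gradient method to $\Sb^\delta_\ell(\omega) x = e_j$; standard CG theory gives a convergence rate $\big((\sqrt{\kappa}-1)/(\sqrt{\kappa}+1)\big)^k$, so $k\simeq 1+|\log\delta|$ iterations suffice to reach relative accuracy $\delta$. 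Taking the resulting vectors as columns of $\Rb^\delta_\ell(\omega)$ and assembling blockwise yields a block-diagonal matrix $\Rb^\delta(\omega)$ that $\delta$-approximates $(\Sb^\delta(\omega))^{-1}$ in spectral norm. In particular $\level(i)\neq\level(j)$ immediately forces $\Rb_{ij}^\delta(\omega)=0$.

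Next I would track the support growth across the Krylov iteration. The $k$-th CG iterate lies in the Krylov subspace generated by $\Sb^\delta_\ell(\omega)$ and $e_j$, and each multiplication by the banded matrix $\Sb^\delta_\ell(\omega)$ inflates the support of a vector by at most $\zeta(1+|\log\delta|)$ in the metric $d$. Hence the computed approximation of $(\Sb^\delta_\ell)^{-1} e_j$ is supported in $\{i:\level(i)=\ell,\,d(i,j)\leq k\zeta(1+|\log\delta|)\}$. With $k\simeq 1+|\log\delta|$ this gives the support bound \eqref{eq:invbanded} for an appropriate $C_{\rm inv}$. Volume counting in the metric $d$ (using shape regularity and that $\#\HH_\ell \lesssim h_\ell^{-d}$) then yields $\mathrm{nnz}(\Rb^\delta)\lesssim N(1+|\log\delta|)^d$.

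Finally, I would control the error $\norm{\Sb(\omega)^{-1}-\Rb^\delta(\omega)}{2}$ by the splitting
\[
\Sb^{-1}-\Rb^\delta = \Sb^{-1}(\Sb^\delta-\Sb)(\Sb^\delta)^{-1} + \big((\Sb^\delta)^{-1}-\Rb^\delta\big).
\]
Lemma~\ref{lem:riesz} provides uniform bounds on $\norm{\Sb^{-1}}_2$ and $\norm{(\Sb^\delta)^{-1}}_2$, and Lemma~\ref{lem:stiffness} gives $\norm{\Sb-\Sb^\delta}_2\lesssim\delta$, so the first term is $O(\delta)$; the second term is $O(\delta)$ by construction. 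Choosing the CG tolerance as a sufficiently small constant multiple of $\delta$ then produces the stated bound. The main obstacle is the bookkeeping: the same symbol $\delta$ controls the block-diagonalization error, the bandwidth, the CG tolerance, and the target accuracy, so one must simultaneously calibrate the CG stopping criterion and the constant $C_{\rm inv}$ so that the logarithmic factor in the bandwidth and the logarithmic number of CG steps multiply to the $|\log\delta|^2$ appearing in \eqref{eq:invbanded} without inflating the final error estimate.
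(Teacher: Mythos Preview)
Your proposal is correct and follows essentially the same route as the paper: both run $\mathcal{O}(|\log\delta|)$ CG iterations on the well-conditioned block-diagonal matrix $\Sb^{\tilde\delta}(\omega)$ (the paper introduces a separate parameter $\tilde\delta\simeq\delta$ where you reuse $\delta$), track the bandwidth growth of the Krylov iterates via Lemma~\ref{lem:stiffness}, and then pass from $(\Sb^{\tilde\delta})^{-1}$ to $\Sb^{-1}$ by the perturbation bound of Lemma~\ref{lem:stiffness} combined with the uniform conditioning from Lemma~\ref{lem:riesz}. Your explicit resolvent-type splitting $\Sb^{-1}-\Rb^\delta=\Sb^{-1}(\Sb^\delta-\Sb)(\Sb^\delta)^{-1}+\big((\Sb^\delta)^{-1}-\Rb^\delta\big)$ makes precise what the paper leaves implicit in the phrase ``with Lemma~\ref{lem:stiffness}, we conclude the proof.''
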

\begin{proof}
	Due to Lemma~\ref{lem:stiffness} and the fact that $\BBb(\omega)$ is a Riesz basis (Lemma~\ref{lem:riesz}), we observe that all eigenvalues of $\Sb^\delta(\omega)$ are of order $\mathcal{O}(1)$ as
	long as $\delta\lesssim 1$. 
	Therefore, we can obtain $\Rb^\delta(\omega)$ by application of CG steps to $\Sb^{\tilde\delta}(\omega)$ (we chose ${\tilde\delta}>0$ later, see, e.g.,~\cite[Chapter~6]{SaadIter}).
	The convergence
	properties of CG show
	\begin{align*}
	\norm{\Sb^{\tilde\delta}(\omega)^{-1}-\Rb^\delta(\omega)}{2}\leq \delta
	\end{align*}
	if we perform $k=\mathcal{O}(|\log(\delta)|+1)$ CG-steps. This follows since
	\begin{align*}
	 \norm{{\rm res}_k}{\ell_2}\simeq \sqrt{\Sb^{\tilde\delta}(\omega){\rm res}_k\cdot {\rm res}_k}
	 \end{align*}
	for the residual ${\rm res}_k$ of the CG method.
	From Lemma~\ref{lem:stiffness}, we see that $\Rb^\delta(\omega)$ satisfies
	\begin{align*}
	d(i,j)> \zeta(|\log(\delta)|+1)^2 \text{ or }\level(i)\neq \level(j)\quad\implies\quad \Rb_{ij}^\delta(\omega)=0,
	\end{align*}
	since each CG-step increases the bandwidth by the original bandwidth.
	With Lemma~\ref{lem:stiffness}, we conclude the proof by choosing $k\simeq 1+|\log(\delta)|$ and ${\tilde\delta}\simeq \delta$.
\end{proof}

\begin{lemma}\label{lem:avgmatrix}
We define a discrete approximation to $\AA^{-1}$ by
\begin{align*}
R:=\E\bigl[ \big(\Tb^{-T}(\omega)\Sb(\omega)\Tb^{-1}(\omega)\big)^{-1}\bigr]
=\E \bigl[\Tb(\omega)\Sb(\omega)^{-1}\Tb(\omega)^T\bigr].
\end{align*}
For $\delta>0$ as in Lemma~\ref{lem:riesz},
we define a perturbed and truncated version of $R$ by $R^\delta\in\R^{N\times N}$
\begin{align}
(R^\delta)_{(\ell,k)}:=\begin{cases} \Big(\E\bigl[ \Tb^\delta(\omega)\Rb^\delta(\omega)\Tb^\delta(\omega)^T\bigr]\Big)_{(\ell,k)} & \ell + k \leq |\log(\delta)|,\\
                        0 &\text{else.}
                       \end{cases}
\end{align}
which satisfies $\norm{R-R^\delta}{2}\leq CL^2\delta$. 
The number of non-zero entries in $R^\delta$ is bounded by $\operatorname{nnz}(R^\delta)\lesssim L/\delta^d$.
The constant $C>0$ depends only on $D$, the shape regularity of $\TT_0$ and the contrast 
$\gamma_{\operatorname{max}}/\gamma_{\operatorname{min}}$.
\end{lemma}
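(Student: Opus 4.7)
The plan is to split the error $R - R^\delta$ into two contributions: a \emph{perturbation} part coming from replacing the exact objects $\Tb$, $\Sb^{-1}$ by their computable surrogates $\Tb^\delta$, $\Rb^\delta$, and a \emph{truncation} part coming from discarding all blocks with $\ell + k > |\log\delta|$. These are estimated in the spectral norm and combined via the triangle inequality. The sparsity count is a separate combinatorial calculation on the retained blocks.

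\textbf{Perturbation step.} For fixed $\omega$, I would telescope
\[
\Tb\Sb^{-1}\Tb^T - \Tb^\delta\Rb^\delta(\Tb^\delta)^T = (\Tb-\Tb^\delta)\Sb^{-1}\Tb^T + \Tb^\delta(\Sb^{-1}-\Rb^\delta)\Tb^T + \Tb^\delta\Rb^\delta\bigl(\Tb - \Tb^\delta\bigr)^T
\]
and apply submultiplicativity. The delicate bookkeeping here is the uniform-in-$\omega$ estimate $\|\Tb(\omega)\|_2, \|\Tb^\delta(\omega)\|_2 \lesssim 1$, which I would obtain by summing the blockwise scaling $\|\Tb^\delta_{(k,\ell)}\|_2 \lesssim h_k$ from \eqref{eq:Tdecay} as a geometric series in $h_k \simeq 2^{-k}$; uniform bounds on $\Sb^{-1}$ and $\Rb^\delta$ come from Lemma~\ref{lem:riesz} and Lemma~\ref{lem:invblock}. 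Plugging in $\|\Tb-\Tb^\delta\|_2 \lesssim L\delta$ (Lemma~\ref{lem:basistrans}) and $\|\Sb^{-1}-\Rb^\delta\|_2 \leq \delta$ (Lemma~\ref{lem:invblock}) then yields a pointwise estimate of order $L\delta$, which passes through expectation by $\|\E[X]\|_2 \leq \E[\|X\|_2]$.

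\textbf{Truncation step.} Using that $\Rb^\delta(\omega)$ is block-diagonal across levels and $\Tb^\delta(\omega)$ is lower block-triangular, I would expand each block as
\[
\bigl(\Tb^\delta\Rb^\delta(\Tb^\delta)^T\bigr)_{(\ell,k)} = \sum_{m\leq\min(\ell,k)} \Tb^\delta_{(\ell,m)}\,\Rb^\delta_{(m,m)}\,(\Tb^\delta_{(k,m)})^T.
\]
The blockwise bound \eqref{eq:Tdecay} and the uniform bound on $\Rb^\delta$ give $\|(\E[\Tb^\delta\Rb^\delta(\Tb^\delta)^T])_{(\ell,k)}\|_2 \lesssim L\,h_\ell h_k$; on the truncation region $\ell+k > |\log\delta|$ one has $h_\ell h_k \lesssim \delta$, so every discarded block has norm $\lesssim L\delta$. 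The elementary block-Frobenius inequality $\|E\|_2 \leq (\sum_{\ell,k}\|E_{(\ell,k)}\|_2^2)^{1/2}$ applied to the at most $L^2$ dropped blocks yields a truncation contribution of order $L^2\delta$. Adding the perturbation error proves $\|R-R^\delta\|_2 \lesssim L^2\delta$. I expect this truncation step to be the main obstacle: the hyperbolic cross $\ell+k \leq |\log\delta|$ has to be exactly aligned with the multiplicative decay in \eqref{eq:Tdecay} for the $L^2\delta$ bound to close, and the choice of block-level norm-combination inequality matters.

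\textbf{Sparsity count.} Each retained block has dimension $\#\HH_\ell \times \#\HH_k \simeq 2^{(\ell+k)d}$. Grouping the $(\ell,k)$-pairs with $\ell+k = s$ (there are $s+1$ of them) and summing,
\[
\operatorname{nnz}(R^\delta) \leq \sum_{\ell+k\leq|\log\delta|}\#\HH_\ell\,\#\HH_k \lesssim \sum_{s=0}^{|\log\delta|}(s+1)\,2^{sd} \lesssim L\,\delta^{-d},
\]
the final inequality following from geometric dominance of the terminal term.
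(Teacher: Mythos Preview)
Your argument is correct and reaches the same conclusion as the paper, but the route differs in a noteworthy way.

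For the \emph{perturbation} step, the paper works blockwise from the start: it partitions vectors by level, bounds each block $(R-\widetilde R^\delta)_{(\ell,k)}$ separately by appealing to the blockwise perturbation bounds implicit in Lemmas~\ref{lem:stiffness}, \ref{lem:basistrans}, \ref{lem:invblock}, and then recombines via the level decomposition to obtain $\|R-\widetilde R^\delta\|_2\lesssim L^2\delta$. Your global telescoping plus submultiplicativity is more direct and in fact yields the sharper $\|R-\widetilde R^\delta\|_2\lesssim L\delta$. The key ingredient you add, and which the paper does not isolate explicitly, is the uniform bound $\|\Tb(\omega)\|_2,\|\Tb^\delta(\omega)\|_2\lesssim 1$; your derivation of this from the geometric block decay \eqref{eq:Tdecay} is valid (and could alternatively be seen from the fact that $\Tb(\omega)$ represents the $L^2$-contraction $\Pi_L$ between two Riesz bases).

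For the \emph{truncation} step, both proofs compute the same blockwise bound $\|(\widetilde R^\delta)_{(\ell,k)}\|_2\lesssim L\,2^{-\ell-k}$ from \eqref{eq:Tdecay} and the block-diagonality of $\Rb^\delta$. The paper then recombines via vector partitioning and a geometric tail sum in $2^{-i-j}$; you instead invoke the block-Frobenius inequality over the at most $L^2$ discarded blocks. Both give $L^2\delta$, and your route is slightly more transparent. The sparsity count is identical to the paper's.

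In short: your proof is a legitimate, somewhat streamlined variant. The paper's blockwise bookkeeping has the advantage of never needing the global bound on $\|\Tb\|_2$, while your approach trades that for a cleaner one-line telescoping.
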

\begin{proof}
We define the auxiliary operator 
\begin{align*}
 \widetilde R^\delta:=\E\bigl[ \Tb^\delta(\omega)\Rb^\delta(\omega)\Tb^\delta(\omega)^T\bigr].
\end{align*}
Analogously to matrix sub-blocks, we may partition vectors $x\in \R^N$ by $x = (x_{(1)},\ldots,x_{(L)})$ with $x_{(\ell)}\in \R^{\#\HH_\ell}$. Using this notation and following the proofs of Lemma~\ref{lem:stiffness}, Lemma~\ref{lem:basistrans}, and Lemma~\ref{lem:invblock}, we show
\begin{align*}
 \norm{(R-\widetilde R^\delta)x}{\ell_2}^2\lesssim\sum_{\ell=1}^L\biggl\|\sum_{k=1}^L  (R_{(\ell,k)}-\widetilde R_{(\ell,k)}^\delta)x_{(k)}\biggr\|_{\ell_2}^2\lesssim
 \delta^2 L^4 \sum_{\ell=1}^L\sum_{k=1}^L  \norm{x_{(k)}}{\ell_2}^2=\delta^2 L^4\norm{x}{\ell_2}^2
\end{align*}
and hence $\norm{R-\widetilde R^\delta}{2}\lesssim \delta L^2$.
The estimate~\eqref{eq:Tdecay} implies for $\ell+k>|\log(\delta)|$ 
\begin{align*}
\norm{(\widetilde R^\delta- R^\delta)_{(\ell,k)}}{2}&\leq 
\sum_{j=0}^L \norm{\Tb^\delta(\omega)_{(\ell,j)}}{2}\norm{\Rb^\delta(\omega)_{(j,j)}}{2}\norm{(\Tb^\delta(\omega)_{(k,j)})}{2}\\
&\lesssim 
\sum_{j=0}^L h_\ell(1+\delta)h_k\\
 &\lesssim L2^{-\ell-k}.
\end{align*}
This implies for $x\in\R^N$
\begin{align*}
 \norm{(\widetilde R^\delta- R^\delta)x}{\ell_2}^2&\leq \sum_{i,j=0}^L   \norm{(\widetilde R^\delta- R^\delta)_{(i,j)}x|_{(j)}}{\ell_2}^2\lesssim \sum_{j=0}^L \norm{x|_{(j)}}{\ell_2}^2\sum_{i=|\log(\delta)|-j}^{L} L 2^{-i-j}\\
 &\lesssim L\delta \norm{x}{\ell_2}^2.
\end{align*}
The number of non-zero entries in $R^\delta$ can be bounded sufficiently by ignoring the sparsity within the blocks and just summing up the entries
\begin{align*}
 \sum_{0\leq i+j\leq |\log(\delta)|} \# (R^\delta)_{(i,j)}\lesssim \sum_{0\leq i+j\leq |\log(\delta)|} 2^{d(i+j)}\lesssim L \delta^{-d},
\end{align*}
where we used that $(R^\delta)_{i,j}\in \R^{\#\HH_i\times \#\HH_j}$ and $\#\HH_i\simeq 2^{di}$.
This concludes the proof.
\end{proof}

To formulate the following main theorem, we identify the matrix $R^\delta$ with an operator $\RR^\delta\colon L^2(D)\to L^2(D)$
via the natural embedding $\iota \colon \R^N \to {\rm span}(\HH)$, $\iota(\alpha)=\sum_{i=1}^N \alpha_i\phi_i\in L^2(D)$.
There holds $\RR^\delta := \iota R^\delta \iota^\star$.

\begin{theorem}\label{t:main}
For a given accuracy $\delta>0$ with $\delta(1+|\log(\delta)|) \lesssim 1$ sufficiently small, there exists a finite dimensional operator $\RR^\delta\colon L^2(D)\to L^2(D)$ which depends only on $\delta$ such that
\begin{align*}
\norm{\AA^{-1}-\RR^\delta}{\mathcal{L}(L^2(D),L^2(D))}\leq \delta.
\end{align*}
The corresponding operator matrix $R^\delta$ from Lemma~\ref{lem:avgmatrix} has at most 
$\mathcal{O}(|\log(\delta)|^{2d+1}\delta^{-d})$ non-zero entries.
The hidden constant depends only on $D$, the shape regularity of $\TT_0$ and the contrast 
$\gamma_{\operatorname{max}}/\gamma_{\operatorname{min}}$.
\end{theorem}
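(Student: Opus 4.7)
The plan is to split $\|\AA^{-1} - \RR^\delta\|_{L^2\to L^2}$ into a Galerkin discretization error and a matrix perturbation error by introducing the intermediate operator $\RR := \iota R \iota^*$ with $R$ from Lemma~\ref{lem:avgmatrix}. I choose the finest level $L \simeq |\log\delta|$ so that $h_L \lesssim \delta$, and invoke Lemma~\ref{lem:avgmatrix} with internal accuracy $\delta' \simeq \delta/L^2$, producing $R^\delta$ with $\|R - R^\delta\|_2 \lesssim \delta$. Since $\iota$ and $\iota^*$ have operator norms bounded by $\max_i\|\phi_i\|_{L^2}$, which is uniformly bounded by the coarse-scale diameter of $D$, this yields $\|\RR - \RR^\delta\|_{L^2\to L^2} \lesssim \delta$. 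It therefore remains to bound $\|\AA^{-1}-\RR\|_{L^2\to L^2}$.

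To interpret $\RR$ probabilistically, I trace the definitions in Section~\ref{sec:transform}. For $f\in L^2(D)$, let $u_L(\omega) \in V_L(\omega) := {\rm span}\,\BBb(\omega)$ denote the Galerkin solution of $\ab_\omega(u_L(\omega), v) = (\Pi_L f, v)_{L^2}$ for all $v\in V_L(\omega)$. Writing $u_L(\omega) = \sum_j \alpha_j(\omega)\bb_j(\omega)/\enorm{\bb_j(\omega)}_\omega$, the defining identity $\Tb_{ij}(\omega) = (\bb_j,\phi_i)_{L^2}/(\enorm{\bb_j}_\omega\|\phi_i\|_{L^2}^2)$ shows that the load vector equals $\Tb(\omega)^T \iota^* f$, hence $\alpha(\omega) = \Sb(\omega)^{-1}\Tb(\omega)^T\iota^* f$ and the Haar coefficients of $\Pi_L u_L(\omega)$ are $\Tb(\omega)\alpha(\omega)$. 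Taking expectations identifies $\iota R \iota^* f = \Pi_L \E[u_L(\omega)]$. By Jensen's inequality,
\begin{equation*}
\|(\AA^{-1}-\RR)f\|_{L^2} \leq \E\big[\|\ub(\omega)-\Pi_L u_L(\omega)\|_{L^2}\big],
\end{equation*}
and the estimate reduces to an $\omega$-uniform bound. I split
\begin{align*}
\ub(\omega) - \Pi_L u_L(\omega) &= \AAb(\omega)^{-1}(\id-\Pi_L)f \\
&\quad + \big(\AAb(\omega)^{-1}\Pi_L f - u_L(\omega)\big) + (\id-\Pi_L)u_L(\omega).
\end{align*}
The outer terms are bounded by the classical $L^2$-projection estimate $\|(\id-\Pi_L)g\|_{L^2}\lesssim h_L\|\nabla g\|_{L^2}$ combined with uniform ellipticity, giving $\lesssim h_L\|f\|_{L^2}$. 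The middle Galerkin error is bounded by $\lesssim h_L^2\|f\|_{L^2}$ via an Aubin-Nitsche duality, using that the quasi-interpolation $(\id-\CCb_\ell(\omega))\tilde\Pi_\ell$ into $V_L(\omega)$ achieves optimal approximation order, a consequence of \eqref{eq:normequiv} and the $\ab_\omega$-orthogonality across levels in Lemma~\ref{lem:bbasis}. Summing the contributions yields $\|\AA^{-1}-\RR\|_{L^2\to L^2} \lesssim h_L\lesssim \delta$.

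The non-zero count follows by plugging $\delta'\simeq \delta/L^2$ into the bound $\operatorname{nnz}(R^\delta)\lesssim L/(\delta')^d$ of Lemma~\ref{lem:avgmatrix}, giving $\operatorname{nnz}(R^\delta)\lesssim L^{2d+1}/\delta^d = \mathcal{O}(|\log\delta|^{2d+1}\delta^{-d})$. The main obstacle is the Aubin--Nitsche step above, namely securing the quadratic-in-$h_L$ Galerkin convergence uniformly in $\omega$. While the $h_L$ energy-norm rate is immediate from the quasi-optimality of Galerkin in the conforming space $V_L(\omega)$ together with \eqref{eq:normequiv}, the duality argument producing the extra $h_L$ factor requires a uniform-in-$\omega$ interpolation estimate into $V_L(\omega)$. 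This is a standard LOD-type ingredient \cite{MalP14,HenningPeterseim2013,gamblets,doi:10.1137/120863162}, but must be transferred carefully to the present Haar-companion multiresolution decomposition; once in place, the three-term split and the matrix perturbation from Lemma~\ref{lem:avgmatrix} combine by the triangle inequality to complete the proof.
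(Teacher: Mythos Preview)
Your overall strategy is correct and leads to the right bounds, but the route you take for the term $\|\AA^{-1}-\RR\|_{L^2\to L^2}$ is unnecessarily heavy compared to the paper. The paper avoids both the three-term split and the Aubin--Nitsche argument entirely by exploiting a structural identity of the LOD space: since $\CCb_L(\omega)$ is the $\ab_\omega$-orthogonal projection onto $W_L=\ker(\Pi_L|_V)$, one has the $\ab_\omega$-orthogonal decomposition $V=V_L(\omega)\oplus W_L$ with $V_L(\omega)={\rm span}\,\BBb(\omega)$. Galerkin orthogonality then forces $\ub(\omega)-\ub_L(\omega)\in W_L$, hence $\Pi_L\ub(\omega)=\Pi_L\ub_L(\omega)$ exactly, and therefore
\[
\|\ub(\omega)-\Pi_L\ub_L(\omega)\|_{L^2(D)}=\|(\id-\Pi_L)\ub(\omega)\|_{L^2(D)}\lesssim h_L\|\nabla\ub(\omega)\|_{L^2(D)}\lesssim h_L\|f\|_{L^2(D)}
\]
by the element-wise Poincar\'e inequality and uniform ellipticity. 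This single line replaces your three-term decomposition and eliminates what you call ``the main obstacle'': the uniform-in-$\omega$ interpolation estimate you need for Aubin--Nitsche is precisely the statement $\ub-\ub_L\in W_L$, which is immediate here rather than something that must be ``transferred carefully''.

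Two minor remarks. First, your use of $\Pi_L f$ as the right-hand side for the discrete problem is in fact more careful than the paper's presentation: the identity $\iota^\star f=\Tb(\omega)^{-T}\Fb(\omega)$ asserted in the paper only holds exactly after replacing $f$ by $\Pi_L f$, and your formulation makes this explicit (the discrepancy is of order $h_L\|f\|_{L^2}$ and harmless either way). Second, your bound $\|\iota\|,\|\iota^\star\|\le \max_i\|\phi_i\|_{L^2}$ is correct provided the Haar generators are scaled so that $\|\phi_i\|_{L^2}\lesssim 1$ uniformly; this is the standard scaling but is worth stating. The non-zero count and the rescaling $\delta\mapsto\delta/L^2$ are handled exactly as in the paper.
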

\begin{proof}[Constructive proof]
	We use the operator matrix $R^\delta\in\R^{N\times N}$ from Lemma~\ref{lem:avgmatrix}.
	Given $f\in L^2(D)$, define $\Fb(\omega)\in\R^N$ by $\Fb_i(\omega):=(f,\bb_i/\enorm{\bb_i}_\omega)$.
	By definition, there holds $\Sb(\omega)\alphab(\omega) = \Fb(\omega)$ with $\ub_L(\omega):= \sum_{i=1}^N \alphab_i(\omega) \bb_i/\enorm{\bb_i}_\omega\in {\rm span}(\BBb(\omega))$ being 
	the Galerkin approximation to $\ub(\omega)\in H^1_0(\Omega)$.
	Galerkin orthogonality
		$$\ab_\omega(\ub(\omega)-\ub_L(\omega),{\rm span}(\BBb(\omega)))=0$$
		implies $\ub(\omega)-\ub_L(\omega)\in W_L$ and, hence, $\Pi_L(\ub(\omega)-\ub_L(\omega))=0$. Thus, 
		\begin{align*}
		\norm{\ub(\omega)-\Pi_L\ub_L(\omega)}{L^2(D)}\leq \norm{(1-\Pi_L)\ub(\omega)}{L^2(D)}\lesssim 
		h_L\norm{f}{L^2(D)}
		\end{align*}
		using standard approximation properties of piecewise constants (Poincar\'e inequality) and a standard energy bound.
	With the transfer matrices $\Tb(\omega)$ from Lemma~\ref{lem:basistrans}, we obtain
	\begin{align*}
	\widetilde F:=\iota^\star f = \Tb^{-T}(\omega) \Fb(\omega)
	\end{align*}
	and hence $\betab\in\R^N$ with $\Tb^{-T}(\omega)\Sb(\omega)\Tb^{-1}(\omega)\betab(\omega) =\widetilde F$ satisfies $\Tb(\omega)\alphab(\omega) =\betab(\omega)$. Together with Lemma~\ref{lem:basistrans}, this shows that $\Pi_L\ub_L(\omega) = \sum_{i=1}^N \betab_i(\omega) \phi_i/\norm{\phi_i}{L^2(D)}$.
	The approximate solution $\RR^\delta f= \sum_{i=1}^N \gamma_i \phi_i/\norm{\phi_i}{L^2(D)}$ with $\gamma:=R^\delta \widetilde F$ satisfies 
	\begin{align*}
	|\gamma-\E[\betab]|\lesssim L^2\delta\norm{f}{L^2(D)},
	\end{align*}
	by use of Lemma~\ref{lem:avgmatrix} and since $\E_M[\betab]=R\widetilde F$. Since $\HH$ is an orthogonal basis, we obtain immediately 
	$\norm{\RR^\delta f - \RR f}{L^2(D)}\lesssim L^2\delta\norm{f}{L^2(D)}$, where
	\begin{align*}
	\RR^\delta f=\E[\bu_L].
	\end{align*}
	Combining the above error bounds, we conclude
	\begin{align*}
	\norm{\E[\ub]-\RR^\delta f}{L^2(D)}\lesssim (L^2\delta+h_L)\norm{f}{L^2(D)}.
	\end{align*}
	With $L\simeq|\log\delta|$ and $h_L\simeq \delta$ there holds $\norm{\E[\ub]-\RR^\delta f}{L^2(D)}\lesssim (1+|\log(\delta)|^2)\delta\norm{f}{L^2(D)}$. Replacing $\delta$ with $\delta/L^2$, we conclude the proof. 
\end{proof}

\section{Sparse operator compression}\label{s:sos}
Theorem~\ref{t:main} shows that the expected operator can indeed be compressed to a sparse matrix. The constructive proof motivates a compression algorithm by simply replacing the expectation by a suitable sample mean. For this purpose, let  $\Omega_M\subset \Omega$ be a finite set of sampling points with $|\Omega_M|=M\in\N$ and define the sample mean $\E_M[\mathbf{X}]:=M^{-1}\sum_{\omega\in\Omega_M}\mathbf{X}(\omega)$ for a random field $\mathbf{X}$. 
It is readily seen that Lemma~\ref{lem:avgmatrix} remains valid when $\E$ is replaced by $\E_M$. More precisely, define 
	\begin{align*}
	R_M:=\E_M\bigl[\big(\Tb^{-T}(\omega)\Sb(\omega)\Tb^{-1}(\omega)\big)^{-1}\bigr]
	=\E_M\bigl[\Tb(\omega)\Sb(\omega)^{-1}\Tb(\omega)^T\bigr]
	\end{align*}
	and a perturbed and truncated version of $R_M$ by $R_M^\delta\in\R^{N\times N}$
	\begin{align}\label{e:trunc}
	(R_M^\delta)_{(\ell,k)}:=\begin{cases} \Big(\E_M\bigl[ \Tb^\delta(\omega)\Rb^\delta(\omega)\Tb^\delta(\omega)^T\bigr]\Big)_{(\ell,k)} & \ell + k \leq |\log(\delta)|,\\
	0 &\text{else.}
	\end{cases}
	\end{align}
Then 
\begin{equation}
\norm{R_M-R_M^\delta}{2}\leq CL^2\delta
\end{equation}
and the number of non-zero entries in $R_M^\delta$ is bounded by $\mathcal{O}(L/\delta^d)$.

\begin{remark}\label{r:trunc}
	The truncation condition $\ell + k \leq |\log(\delta)|$ in \eqref{e:trunc} can be relaxed to $\ell + k \leq C|\log(\delta)|$ for some $C\simeq 1$ without any harm.
	In practice, when $L\simeq|\log\delta|$ is chosen, a natural choice would be $\ell + k \leq L$. In the numerical experiment of Section~\ref{s:numexp} 
	we will see that sometimes it can be advantageous to include a few more blocks of the lower right part of the matrix (see Eq.~\eqref{e:tildeR}) to recover gradient information. 
\end{remark}
The analog of Theorem~\ref{t:main} in this discrete stochastic setting then reads.
\begin{corollary}
For given an accuracy $\delta>0$ as in Theorem~\ref{t:main} and a set of $M$ samples $\Omega_M\subset \Omega$, $M\in\N$, there exists a finite dimensional operator $\RR_M^\delta\colon L^2(D)\to L^2(D)$ which depends only on the sample coefficients $\Ab(\omega)$, $\omega\in\Omega_M$, $\delta$, and $D$, such that
	\begin{align*}
	\norm{\AA^{-1}-\RR_M^\delta}{\mathcal{L}(L^2(D),L^2(D))}\leq \delta + 
	\norm{(\E-\E_M)[\AAb^{-1}]}{\mathcal{L}(L^2(D),L^2(D))}.
	\end{align*}
	The corresponding operator matrix $R_M^\delta$ has $\mathcal{O}(|\log(\delta)|^{2d+1}\delta^{-d})$ non-zero entries.
The hidden constant depends only on $D$, the shape regularity of $\TT_0$ and the contrast 
$\gamma_{\operatorname{max}}/\gamma_{\operatorname{min}}$.
\end{corollary}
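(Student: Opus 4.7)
The plan is to combine the triangle inequality with a sample-wise repetition of the proof of Theorem~\ref{t:main}. I would first split
\[
\|\AA^{-1} - \RR_M^\delta\|_{\mathcal{L}(L^2(D),L^2(D))}
\leq \|(\E - \E_M)[\AAb^{-1}]\|_{\mathcal{L}(L^2(D),L^2(D))}
+ \|\E_M[\AAb^{-1}] - \RR_M^\delta\|_{\mathcal{L}(L^2(D),L^2(D))}.
\]
The first summand is exactly the stochastic sampling error appearing in the statement of the corollary, so only the second summand has to be controlled by $\delta$ (after the usual rescaling $\delta\mapsto\delta/L^2$ with $L\simeq|\log\delta|$).

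The key observation is that every estimate derived in Sections~\ref{s:wavelets}--\ref{s:inverse} is sample-wise and uniform in $\omega$: Lemmas~\ref{lem:stiffness}, \ref{lem:basistrans}, and \ref{lem:invblock} hold for each fixed $\omega\in\Omega_M$, and the sparsity patterns of $\Sb^\delta(\omega)$, $\Tb^\delta(\omega)$, and $\Rb^\delta(\omega)$ depend only on the mesh and on $\delta$, not on $\omega$. Because $\E_M$ is a convex combination of $M$ point evaluations, replacing $\E$ by $\E_M$ in the proof of Lemma~\ref{lem:avgmatrix} preserves every block bound verbatim: the off-diagonal block $(\widetilde R_M^\delta - R_M^\delta)_{(\ell,k)}$ still decays like $L\,2^{-\ell-k}$, and one obtains $\|R_M - R_M^\delta\|_2\leq CL^2\delta$ — precisely the estimate announced right before the corollary.

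Next I would repeat the constructive proof of Theorem~\ref{t:main} with $\E_M$ in place of $\E$ throughout. Galerkin orthogonality, the change-of-basis identity $\betab(\omega)=\Tb(\omega)\alphab(\omega)$, and the piecewise-constant projection bound $\|(\id-\Pi_L)\ub(\omega)\|_{L^2(D)}\lesssim h_L\|f\|_{L^2(D)}$ all hold pointwise in $\omega$, so averaging them under $\E_M$ produces
\[
\|\E_M[\ub] - \RR_M^\delta f\|_{L^2(D)}\lesssim (L^2\delta + h_L)\|f\|_{L^2(D)},
\]
with $\RR_M^\delta:=\iota R_M^\delta\iota^\star$. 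Choosing $L\simeq|\log\delta|$, $h_L\simeq\delta$ and then rescaling $\delta$ by $L^2$ yields the desired $\delta$-bound for the second summand of the triangle inequality. Combined with the first summand this gives the stated error estimate, while the nonzero-entry count $\mathcal{O}(|\log\delta|^{2d+1}\delta^{-d})$ follows directly from Lemma~\ref{lem:avgmatrix}, whose sparsity bookkeeping is insensitive to the choice of averaging operator.

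The main (mild) obstacle is the bookkeeping check that the $\omega$-dependent matrices $\Sb^\delta(\omega)$, $\Tb^\delta(\omega)$, $\Rb^\delta(\omega)$ can be averaged without losing sparsity; this is guaranteed by the $\omega$-independence of the sparsity patterns established in Lemmas~\ref{lem:stiffness} and \ref{lem:basistrans}. Beyond that, the argument is a direct transcription of the proof of Theorem~\ref{t:main}, with $\E_M$ replacing $\E$ and the sampling-error term entering only through the initial triangle inequality.
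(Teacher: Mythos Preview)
Your proposal is correct and matches the paper's approach: the paper does not give a formal proof of the corollary but states just before it that Lemma~\ref{lem:avgmatrix} remains valid with $\E$ replaced by $\E_M$ (yielding $\|R_M-R_M^\delta\|_2\leq CL^2\delta$) and then declares the corollary to be the analog of Theorem~\ref{t:main} in the discrete stochastic setting. Your triangle-inequality split followed by a sample-wise repetition of the proof of Theorem~\ref{t:main} is exactly this argument made explicit.
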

When using a plain Monte Carlo sampling the mean squared sampling error scales like $M^{-1}$
meaning that $M\simeq\delta^{-2}$ samples suffice to ensure that the sampling error is not dominating the error bound. 
This is optimal in the present setting with no assumptions on the distribution of the random diffusion coefficient.
More advanced sampling techniques such as quasi Monte Carlo methods are certainly possible under additional assumptions such as a rapid decay of eigenvalues of a given Karhunen-Lo\`eve expansion
of the random parameter (see~\cite{hoqmc} for a discussion in terms of PDEs with random parameters). 
Even more promising is the possible intertwining of the hierarchical decomposition and the sampling procedure in the spirit of multilevel/multi-index Monte Carlo (see, e.g.,~\cite{giles,mimi1} for the seminal works
as well as~\cite{mifem}). At least in the regime where stochastic homogenization applies, the computation of basis functions is likely to be essentially independent of the parameter $\omega$ for
levels that are much coarser than the characteristic length scale of random oscillation (or correlation) \cite{GallistlPeterseim2017random}. This has been made rigorous in a two-level setting in \cite{2019arXiv191211646F}. The increasing variance for the levels approaching the scale of correlation, stationarity could be exploited to improve the overall complexity.
\smallskip

Another interesting case is the use of log-normal coefficients $\Ab(\omega) = \exp(\boldsymbol{Z}(\omega))$ for a normal random field $\boldsymbol{Z}$.
As shown in~\cite{hgauss}, such random fields can be efficiently generated for general covariance functions and non-uniform grids. The present analysis, however, breaks
down since the assumption of bounded contrast in~\eqref{e:classM} is violated. The authors are confident, however, that the arguments can be modified in the sense that
the extreme contrast samples will only appear with very low probability (the tails of the Gaussian density). Thus, a polynomial dependence on the contrast (as is observed for the present construction)
will not perturb the final result.
\smallskip

We shall finally mention that so far the construction relies on the exact solution of the (infinite-dimensional) corrector problems \eqref{eq:psi} and their preconditioned variant, respectively. The elegant way to transfer all results to a fully discrete setting is to consider a space-discrete problem from the very beginning. It is readily seen that all constructions and results remain valid if we replace the space $V=H^1_0(D)$ by a suitable finite dimensional subspace $V_h\subset V$ throughout the paper. We have in mind some standard $V$-conforming finite element space $V_h$ that is based on some regular mesh of width $h$ which turns the preconditioned corrector problems into finite element problems on the mesh $h$ restricted to local subdomains of diameter $h_\ell|\log\delta|$. The only restriction that comes with this discretization step is that the mesh size $h$ limits the number of possible levels $L$ in the hierarchical decomposition and, hence, the possible accuracy $\delta\lesssim h$ when the sparse approximation is compared with the reference solution $\E[\ub_h]$ where $\ub_h$ solves \eqref{e:modelweak} with $V$ replaced with $V_h$. Clearly, the overall accuracy of the fully discrete method depends on the error $\|\E[\ub-\ub_h]\|_{L^2(D)}$ which is a standard finite element error that depends on the spatial regularity of $\Ab$ and also its possible frequencies of oscillations. All this is well understood and implies the usual conditions on the smallness of $h$ so that $\Ab$ is properly resolved (see e.g. \cite{Peterseim2012}). 

\section{Numerical experiment}\label{s:numexp}
This section presents some simple numerical experiments to illustrate 
the performance of the method. We consider the domain $D=[0,1]^d$ for 
$d=1,2$ and the coefficient $\Ab$ is scalar i.i.d.\ and, on each cell 
of the uniform Cartesian mesh $\TT_{\varepsilon}$, it is uniformly 
distributed in the interval 
$[\gamma_{\operatorname{min}},\gamma_{\operatorname{max}}]=[0.5,10]$. 
The mesh width (scale of oscillation/correlation length) is $\varepsilon=2^{-8}$ ($d=1$) and $\varepsilon=2^{-5}$ ($d=2$).

The approximations of the solution operator are based on sequences of 
uniform Cartesian meshes $\TT_\ell$ ($\ell=0,1,2,\ldots,L$) of mesh 
width $h_\ell=2^{-\ell}$ that do not necessarily resolve $\varepsilon$. 
We compute approximations $\RR^L=\RR^\delta_{M_L}$ of the expected solution 
operator depending on the maximal level $L$ which means that we expect $L^2(D)$ 
errors of order $\delta\approx 2^{-L}$. The truncation of blocks is performed based on the criterion $k+\ell\leq L$ as indicated in Remark~\ref{r:trunc}. For the solution of the 
corrector problems and the reference solution $\mathbf{u}_h$ we use 
$d$-linear finite elements on the mesh $\TT_{h}$ where $h=2^{-14}$ 
($d=1$) and $h=2^{-9}$ ($d=2$). To achieve accuracy 
of order $\delta$ (w.r.t. to the reference solution) we perform $\lceil 
L/2\rceil$ CG-iterations for both computing the correctors $\CCb^\delta(\omega)$ 
and inverting the block-diagonal stiffness matrices 
$\Sb^\delta(\omega)$. For the approximation of the expected values we 
use a quasi-Monte Carlo method (particularly a Sobol sequence) with appropriate numbers of sampling 
points $M_h:=h^{-1}$ for the reference solutions and $M_L:=2^{L}$ for 
the approximations. While we did not show that the problem is smooth enough to justify the use of quasi-Monte Carlo sampling, we still observe the expected higher
convergence rate compared to plain Monte Carlo sampling and thus save significant compute time.
\smallskip

Since the computation of a reference expected operator is hardly 
feasible we only compute the error for one non-smooth deterministic 
right-hand side $f=\chi_{[.5,1]\times[0,1]^{d-1}}\in L^2(D)\setminus H^1(D)$.
Figures~\ref{fig:numexp1d}--\ref{fig:numexp2d} (left plots) depict the 
errors $\|\E_{M_h}[\ub_h]-\RR^L f\|_{L^2(D)}$ versus the number of nonzero 
entries of $\operatorname{nnz}(R^L)$ for $L=1,2,\ldots\;$. The results are 
very well in agreement (up to a, possibly pessimistic, logarithmic factor) with the prediction that 
\begin{align*}
\|\E_{M_h}[\ub_h]-\RR^L f\|_{L^2(D)}\lesssim 
M_L^{-1}+\frac{|\log(\operatorname{nnz}(R^L))^{2+1/d}}{\operatorname{nnz}(R^L)^{1/d}}
\end{align*}
for $d=1,2$. 
This is the optimal rate of convergence (up to a logarithmic 
factor) given a piecewise constant approximation.
\smallskip

In this setting where the expected solution $\E[\ub]$ is even $H^2(D)$ 
regular it would be desirable to recover gradient information from the 
piecewise constant approximation by suitable postprocessing, e.g., in the hierarchical basis associated with a constant coefficient. Figures~\ref{fig:numexp1d}--\ref{fig:numexp2d} (left plots) indicate that this is not automatically achieved for non-smooth right-hand sides with the present choice of parameters. However, when the 
truncation in \eqref{e:trunc} is slightly relaxed in the following form
\begin{align}\label{e:tildeR}
(\tilde R^L)_{(\ell,k)}:=\begin{cases} \Big(\E_M\bigl[ 
\Tb^\delta(\omega)\Rb^\delta(\omega)\Tb^\delta(\omega)^T\bigr]\Big)_{(\ell,k)} 
& \ell + k \leq L+\max(1,\lceil\log_2 L\rceil),\\
0 &\text{else,}
\end{cases}
\end{align}
accurate reconstruction of gradients seems possible. From this slightly more accurate but slightly more 
dense approximation $\tilde R^L$ we can reconstruct the coefficients of a smooth 
approximation $u^{1}_L\in\operatorname{span}\BBb(\omega_\Delta)$ (with $\omega_\Delta\in\Omega$ such that $\Ab(\omega_\Delta)=1$) in the hierarchical basis that corresponds to the Laplacian by simply 
applying $T^\delta(\omega_\Delta)^{-1}$ to $\tilde R^Lf$. The errors of this smooth 
postprocessing $\|\nabla (\E_{M_h}[\ub_h]-u^{1}_L)\|_{L^2(D)}$ are 
plotted in Figures~\ref{fig:numexp1d}--\ref{fig:numexp2d} (right plots)
against the number of non-zero entries 
$\operatorname{nnz}(\tilde{R}^L)$. The observed rate of convergence for 
the $H^1$-error is $\operatorname{nnz}(\tilde{R}^L)^{-1/d}$ (up to a 
logarithmic factor) which is nearly optimal. See also the plots on the left of Figures~\ref{fig:numexp1d}--\ref{fig:numexp2d} which indicate that the step from \eqref{e:trunc} to \eqref{e:tildeR} is essential for meaningful gradient reconstruction. 
\begin{figure}
     \centering
\includegraphics[width=0.48\linewidth]{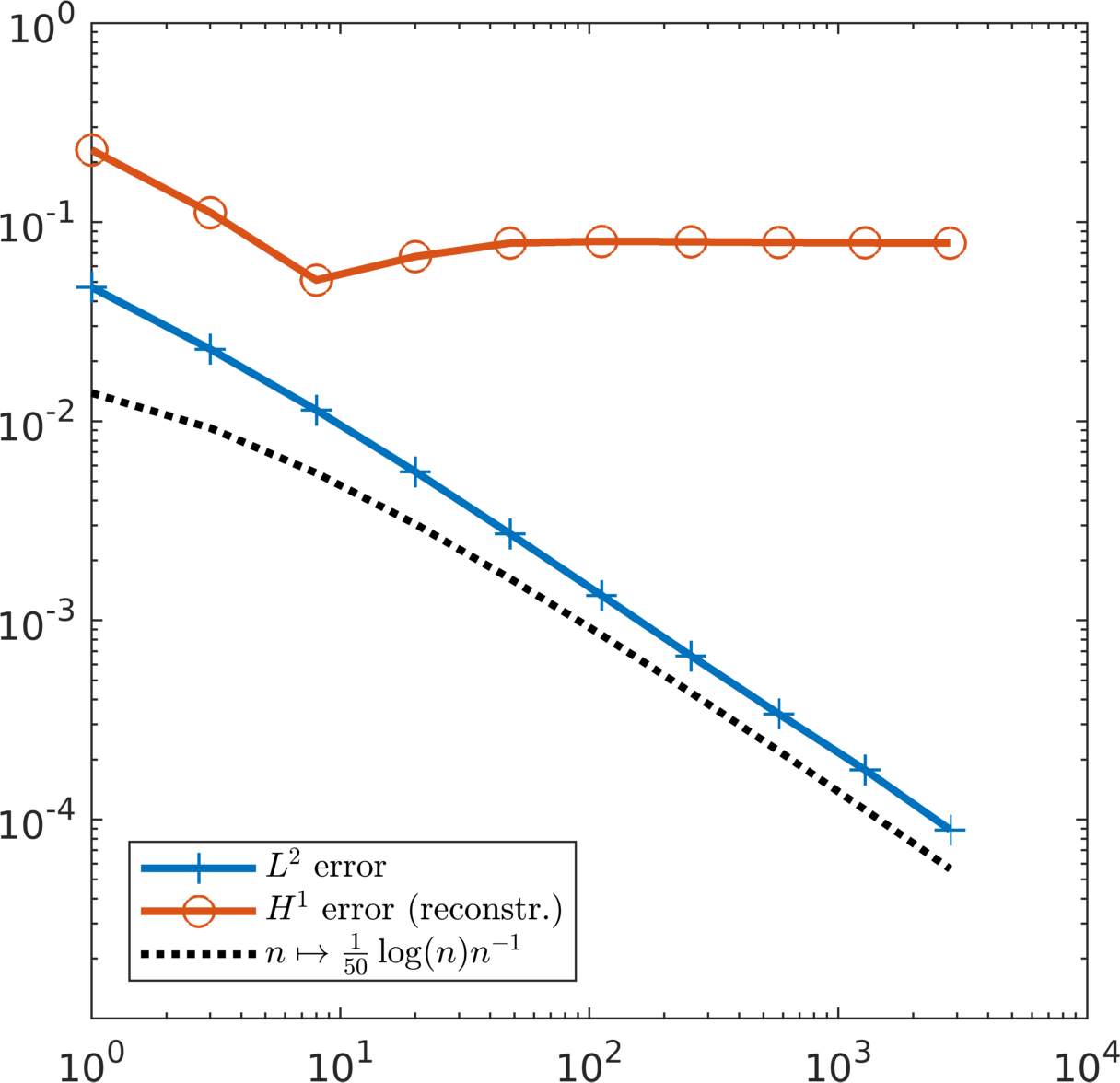}\hspace{0.01\linewidth}
     \includegraphics[width=0.48\linewidth]{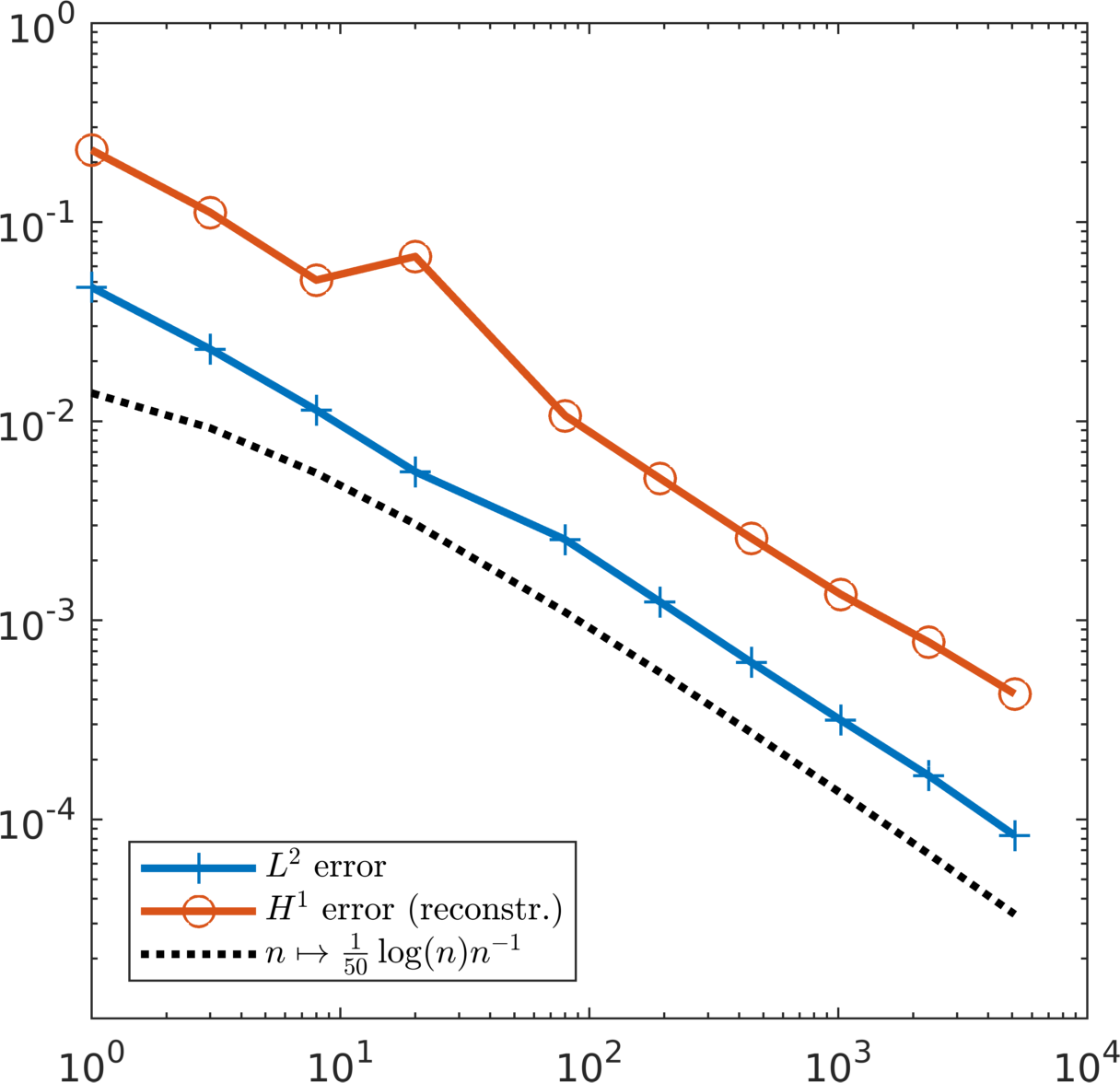}
     \caption{Numerical results in $1d$: $L^2(D)$-errors 
$\|\E_{M_h}[\ub_h]-\RR^L f\|_{L^2(D)}$ and $H^1(D)$-errors $\|\nabla 
(\E_{M_h}[\ub_h]-u^{1}_L)\|_{L^2(D)}$ of post-processed approximation 
for $L=1,2,\ldots,10$. Left: Errors versus $\operatorname{nnz}(R^{L})$ 
using original approach \eqref{e:trunc}. Right: Errors versus 
$\operatorname{nnz}(\tilde R^{L})$ using modified approach 
\eqref{e:tildeR}.}
     \label{fig:numexp1d}
\end{figure}

\begin{figure}
     \centering
\includegraphics[width=0.48\linewidth]{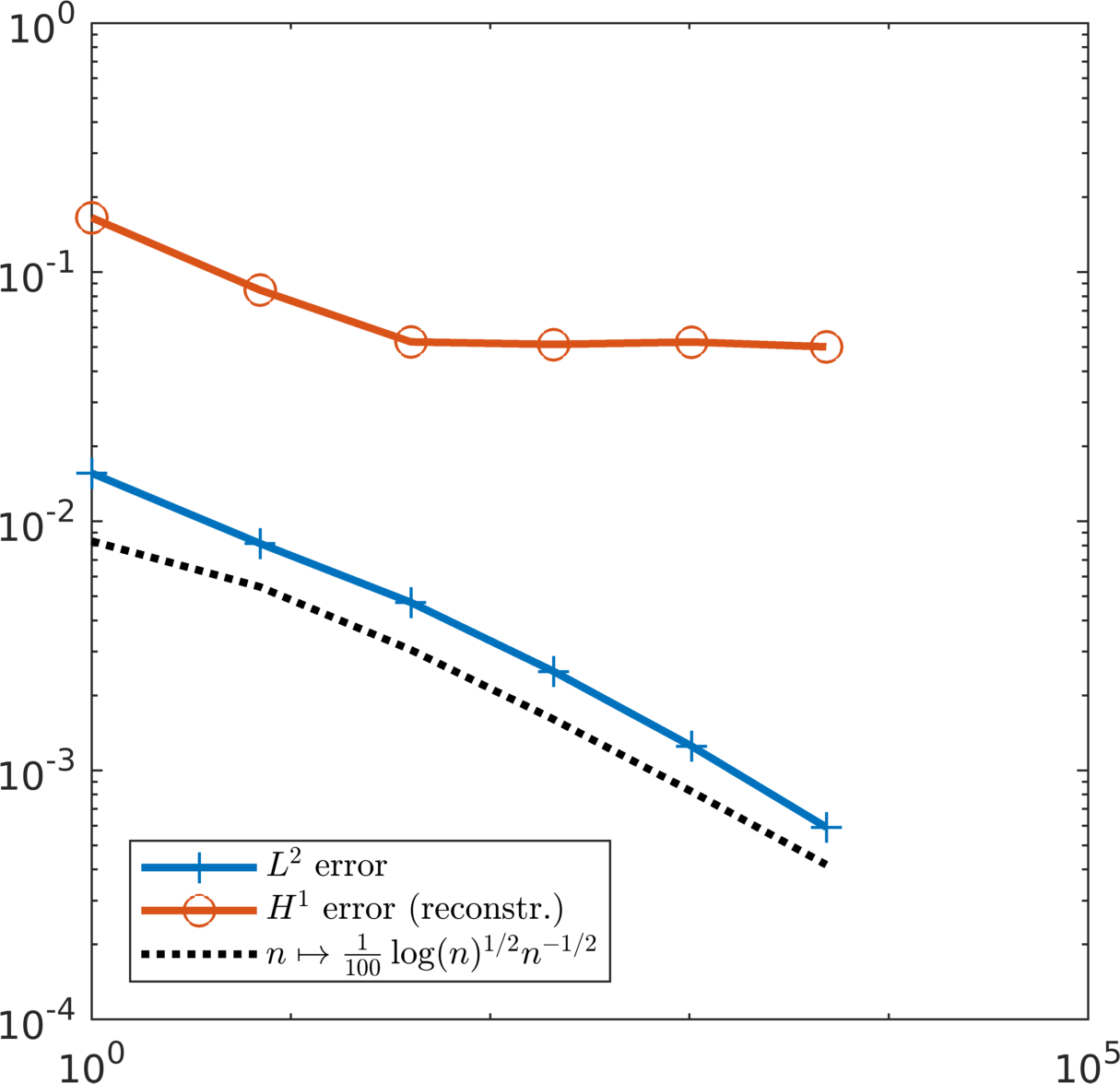}\hspace{0.01\linewidth}
     \includegraphics[width=0.48\linewidth]{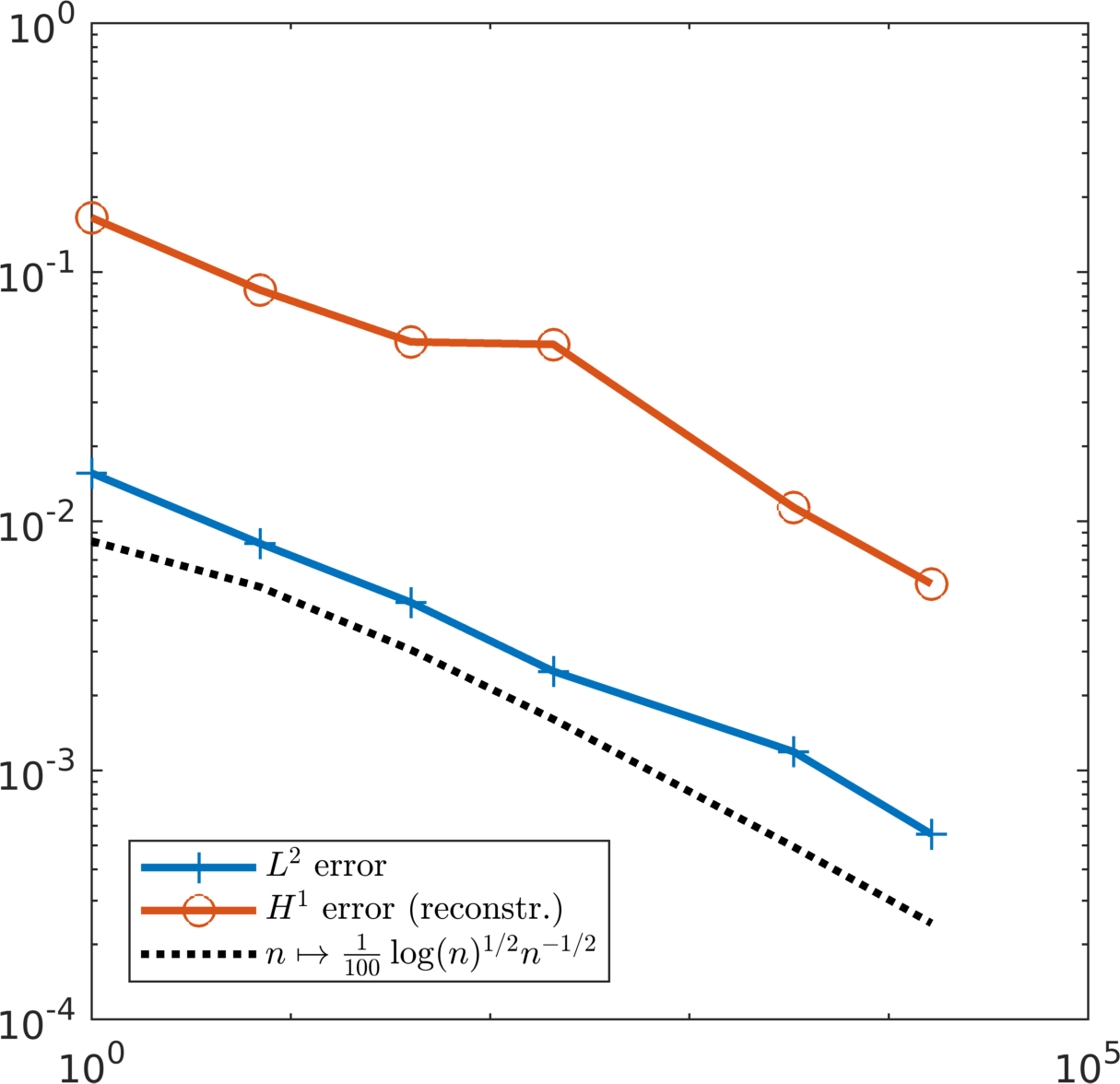}
     \caption{Numerical results in $2d$: $L^2(D)$-errors 
$\|\E_{M_h}[\ub_h]-\RR^L f\|_{L^2(D)}$ and $H^1(D)$-errors $\|\nabla 
(\E_{M_h}[\ub_h]-u^{1}_L)\|_{L^2(D)}$ of post-processed approximation  
for $L=1,2,\ldots,6$. Left: Errors versus $\operatorname{nnz}(R^{L})$ 
using original approach \eqref{e:trunc}. Right: Errors versus 
$\operatorname{nnz}(\tilde R^{L})$ using modified approach 
\eqref{e:tildeR}.}
     \label{fig:numexp2d}
\end{figure}

These first numerical results support the theoretical findings and 
indicate the potential of the approach. Since the techniques that were 
used in the construction of the method and its analysis, in particular 
the localized orthogonal decomposition, generalize in a straight-forward 
way to other classes of operators such as linear elasticity 
\cite{MR3548556} or Helmholtz problems 
\cite{MR3614010,GallistlPeterseim2015,MR3718330}, we believe that the 
sparse compression algorithm for the approximation of expected solution 
operators is applicable beyond the prototypical model problem of this paper.

\bibliographystyle{plain}
\bibliography{literature}

\begin{thebibliography}{10}

\bibitem{ArmstrongKuusiMourrat2017}
S.~Armstrong, T.~Kuusi, and J.-C. Mourrat.
\newblock The additive structure of elliptic homogenization.
\newblock {\em Inventiones mathematicae}, 208(3):999--1154, Jun 2017.

\bibitem{coh2}
M.~Bachmayr, A.~Cohen, R.~DeVore, and G.~Migliorati.
\newblock Sparse polynomial approximation of parametric elliptic {PDE}s. {P}art
  {II}: {L}ognormal coefficients.
\newblock {\em ESAIM Math. Model. Numer. Anal.}, 51(1):341--363, 2017.

\bibitem{coh1}
M.~Bachmayr, A.~Cohen, and G.~Migliorati.
\newblock Sparse polynomial approximation of parametric elliptic {PDE}s. {P}art
  {I}: {A}ffine coefficients.
\newblock {\em ESAIM Math. Model. Numer. Anal.}, 51(1):321--339, 2017.

\bibitem{Bourgain2018}
J.~Bourgain.
\newblock On a homogenization problem.
\newblock {\em Journal of Statistical Physics}, 2018.

\bibitem{BourgeatPiatnitski2004}
A.~Bourgeat and A.~Piatnitski.
\newblock Approximations of effective coefficients in stochastic
  homogenization.
\newblock {\em Ann. Inst. H. Poincar\'e Probab. Statist.}, 40(2):153--165,
  2004.

\bibitem{MR3718330}
D.~L. Brown, D.~Gallistl, and D.~Peterseim.
\newblock Multiscale {P}etrov-{G}alerkin method for high-frequency
  heterogeneous {H}elmholtz equations.
\newblock In {\em Meshfree methods for partial differential equations {VIII}},
  volume 115 of {\em Lect. Notes Comput. Sci. Eng.}, pages 85--115. Springer,
  Cham, 2017.

\bibitem{homl}
J.~Dick, R.~N. Gantner, Q.~T. Le~Gia, and C.~Schwab.
\newblock Multilevel higher-order quasi-{M}onte {C}arlo {B}ayesian estimation.
\newblock {\em Math. Models Methods Appl. Sci.}, 27(5):953--995, 2017.

\bibitem{hoqmc}
J.~Dick, F.~Y. Kuo, Q.~T. Le~Gia, D.~Nuyens, and C.~Schwab.
\newblock Higher order {QMC} {P}etrov-{G}alerkin discretization for affine
  parametric operator equations with random field inputs.
\newblock {\em SIAM J. Numer. Anal.}, 52(6):2676--2702, 2014.

\bibitem{dick}
Josef Dick.
\newblock Walsh spaces containing smooth functions and quasi-{M}onte {C}arlo
  rules of arbitrary high order.
\newblock {\em SIAM J. Numer. Anal.}, 46(3):1519--1553, 2008.

\bibitem{mifem}
Josef Dick, Michael Feischl, and Christoph Schwab.
\newblock Improved efficiency of a multi-index {FEM} for computational
  uncertainty quantification.
\newblock {\em SIAM J. Numer. Anal.}, 57(4):1744--1769, 2019.

\bibitem{DuerinckxGloriaOtto2016}
M.~Duerinckx, A.~Gloria, and F.~Otto.
\newblock The structure of fluctuations in stochastic homogenization.
\newblock {\em arXiv e-prints}, 1602.01717 [math.AP], 2016.

\bibitem{Duer19}
Mitia Duerinckx, Antoine Gloria, and Marius Lemm.
\newblock A remark on a surprising result by bourgain in homogenization.
\newblock {\em Communications in Partial Differential Equations},
  44(12):1345--1357, 2019.

\bibitem{doi:10.1137/120863162}
D.~Elfverson, E.~Georgoulis, and A.~M{\aa}lqvist.
\newblock An adaptive discontinuous galerkin multiscale method for elliptic
  problems.
\newblock {\em Multiscale Modeling \& Simulation}, 11(3):747--765, 2013.

\bibitem{Elfverson.Georgoulis.Mlqvist.ea:2012}
D.~{Elfverson}, E.~H. {Georgoulis}, A.~{M{\aa}lqvist}, and D.~{Peterseim}.
\newblock Convergence of a discontinuous galerkin multiscale method.
\newblock {\em SIAM J. Numer. Anal.}, 51(6):3351--3372, 2013.

\bibitem{hgauss}
Michael Feischl, Frances~Y. Kuo, and Ian~H. Sloan.
\newblock Fast random field generation with {$H$}-matrices.
\newblock {\em Numer. Math.}, 140(3):639--676, 2018.

\bibitem{2019arXiv191211646F}
J.~{Fischer}, D.~{Gallistl}, and D.~{Peterseim}.
\newblock {A priori error analysis of a numerical stochastic homogenization
  method}.
\newblock {\em arXiv e-prints}, page arXiv:1912.11646, Dec 2019.

\bibitem{GallistlPeterseim2015}
D.~{Gallistl} and D.~{Peterseim}.
\newblock Stable multiscale {P}etrov-{G}alerkin finite element method for high
  frequency acoustic scattering.
\newblock {\em Comput. Methods Appl. Mech. Eng.}, 295:1--17, 2015.

\bibitem{GallistlPeterseim2017}
D.~Gallistl and D.~Peterseim.
\newblock Computation of quasi-local effective diffusion tensors and
  connections to the mathematical theory of homogenization.
\newblock {\em Multiscale Model. Simul.}, 15(4):1530--1552, 2017.

\bibitem{GallistlPeterseim2017random}
D.~{Gallistl} and D.~{Peterseim}.
\newblock {Numerical stochastic homogenization by quasilocal effective
  diffusion tensors}.
\newblock {\em ArXiv e-prints}, 1702.08858, 2017.

\bibitem{giles}
M.~B. Giles.
\newblock Multilevel {M}onte {C}arlo path simulation.
\newblock {\em Oper. Res.}, 56(3):607--617, 2008.

\bibitem{GloriaOtto2015}
A.~Gloria and F.~Otto.
\newblock The corrector in stochastic homogenization: optimal rates, stochastic
  integrability, and fluctuations.
\newblock {\em arXiv e-prints}, 1510.08290 [math.AP], 2015.

\bibitem{GloriaOtto2017}
A.~Gloria and F.~Otto.
\newblock Quantitative results on the corrector equation in stochastic
  homogenization.
\newblock {\em J. Eur. Math. Soc. (JEMS)}, 19(11):3489--3548, 2017.

\bibitem{mimi1}
A.-L. Haji-Ali, F.~Nobile, and R.~Tempone.
\newblock Multi-index {M}onte {C}arlo: when sparsity meets sampling.
\newblock {\em Numer. Math.}, 132(4):767--806, 2016.

\bibitem{MR3548556}
P.~Henning and A.~Persson.
\newblock A multiscale method for linear elasticity reducing {P}oisson locking.
\newblock {\em Comput. Methods Appl. Mech. Engrg.}, 310:156--171, 2016.

\bibitem{HenningPeterseim2013}
P.~Henning and D.~Peterseim.
\newblock Oversampling for the multiscale finite element method.
\newblock {\em Multiscale Model. Simul.}, 11(4):1149--1175, 2013.

\bibitem{MR3783084}
T.~Y. Hou, D.~Huang, K.~C. Lam, and P.~Zhang.
\newblock An {A}daptive {F}ast {S}olver for a {G}eneral {C}lass of {P}ositive
  {D}efinite {M}atrices {V}ia {E}nergy {D}ecomposition.
\newblock {\em Multiscale Model. Simul.}, 16(2):615--678, 2018.

\bibitem{MR3736719}
T.~Y. Hou and P.~Zhang.
\newblock Sparse operator compression of higher-order elliptic operators with
  rough coefficients.
\newblock {\em Res. Math. Sci.}, 4:Paper No. 24, 49, 2017.

\bibitem{2018arXiv180410260K}
Jongchon Kim and Marius Lemm.
\newblock On the averaged {G}reen's function of an elliptic equation with
  random coefficients.
\newblock {\em Arch. Ration. Mech. Anal.}, 234(3):1121--1166, 2019.

\bibitem{KorPY18}
R.~Kornhuber, D.~Peterseim, and H.~Yserentant.
\newblock An analysis of a class of variational multiscale methods based on
  subspace decomposition.
\newblock {\em Math. Comp.}, 87(314):2765--2774, 2018.

\bibitem{KorY16}
R.~Kornhuber and H.~Yserentant.
\newblock Numerical homogenization of elliptic multiscale problems by subspace
  decomposition.
\newblock {\em Multiscale Model. Simul.}, 14(3):1017--1036, 2016.

\bibitem{Kozlov1979}
S.~M. Kozlov.
\newblock The averaging of random operators.
\newblock {\em Mat. Sb. (N.S.)}, 109(151)(2):188--202, 327, 1979.

\bibitem{MalP14}
A.~M{\aa}lqvist and D.~Peterseim.
\newblock Localization of elliptic multiscale problems.
\newblock {\em Math. Comp.}, 83(290):2583--2603, 2014.

\bibitem{gamblets}
H.~Owhadi.
\newblock Multigrid with rough coefficients and multiresolution operator
  decomposition from hierarchical information games.
\newblock {\em SIAM Review}, 59(1):99--149, 2017.

\bibitem{OWHADI201799}
H.~Owhadi and L.~Zhang.
\newblock Gamblets for opening the complexity-bottleneck of implicit schemes
  for hyperbolic and parabolic odes/pdes with rough coefficients.
\newblock {\em Journal of Computational Physics}, 347:99 -- 128, 2017.

\bibitem{MR3971243}
Houman Owhadi and Clint Scovel.
\newblock {\em Operator-adapted wavelets, fast solvers, and numerical
  homogenization}, volume~35 of {\em Cambridge Monographs on Applied and
  Computational Mathematics}.
\newblock Cambridge University Press, Cambridge, 2019.
\newblock From a game theoretic approach to numerical approximation and
  algorithm design.

\bibitem{PapanicolaouVaradhan1981}
G.~C. Papanicolaou and S.~R.~S. Varadhan.
\newblock Boundary value problems with rapidly oscillating random coefficients.
\newblock In {\em Random fields, {V}ol. {I}, {II} ({E}sztergom, 1979)},
  volume~27 of {\em Colloq. Math. Soc. J\'anos Bolyai}, pages 835--873.
  North-Holland, Amsterdam-New York, 1981.

\bibitem{Peterseim2016}
D.~Peterseim.
\newblock Variational multiscale stabilization and the exponential decay of
  fine-scale correctors.
\newblock In Gabriel~R. Barrenechea, Franco Brezzi, Andrea Cangiani, and
  Emmanuil~H. Georgoulis, editors, {\em Building Bridges: Connections and
  Challenges in Modern Approaches to Numerical Partial Differential Equations},
  pages 343--369. Springer International Publishing, Cham, 2016.

\bibitem{MR3614010}
D.~Peterseim.
\newblock Eliminating the pollution effect in {H}elmholtz problems by local
  subscale correction.
\newblock {\em Math. Comp.}, 86(305):1005--1036, 2017.

\bibitem{Peterseim2012}
D.~Peterseim and S.~Sauter.
\newblock Finite elements for elliptic problems with highly varying,
  nonperiodic diffusion matrix.
\newblock {\em Multiscale Modeling \& Simulation}, 10(3):665--695, 2012.

\bibitem{SaadIter}
Y.~Saad.
\newblock {\em Iterative methods for sparse linear systems}.
\newblock Society for Industrial and Applied Mathematics, Philadelphia, PA,
  second edition, 2003.

\bibitem{2017arXiv170602205S}
F.~{Sch{\"a}fer}, T.~J. {Sullivan}, and H.~{Owhadi}.
\newblock {Compression, inversion, and approximate PCA of dense kernel matrices
  at near-linear computational complexity}.
\newblock {\em ArXiv e-prints}, June 2017.

\bibitem{MR1436437}
P.~Wojtaszczyk.
\newblock {\em A mathematical introduction to wavelets}, volume~37 of {\em
  London Mathematical Society Student Texts}.
\newblock Cambridge University Press, Cambridge, 1997.

\bibitem{Xu92}
J.~Xu.
\newblock Iterative methods by space decomposition and subspace correction.
\newblock {\em SIAM Rev.}, 34(4):581--613, 1992.

\bibitem{yserentant_1993}
H.~Yserentant.
\newblock Old and new convergence proofs for multigrid methods.
\newblock {\em Acta Numer.}, 2:285--326, 1993.

\bibitem{Yurinskii1986}
V.~V. Yurinski{\u\i}.
\newblock Averaging of symmetric diffusion in a random medium.
\newblock {\em Sibirsk. Mat. Zh.}, 27(4):167--180, 215, 1986.

\end{thebibliography}
\end{document}